\numberwithin{equation}{section}
\newtheorem{Theorem}{Theorem}[section]
\newtheorem{Lemma}[Theorem]{Lemma}
\newtheorem{Proposition}[Theorem]{Proposition}
\newtheorem{Definition}[Theorem]{Definition}
\newtheorem{Remark}[Theorem]{Remark}
\let\div\relax
\DeclareMathOperator{\div}{\mathrm{div}}
\newcommand{\Id}{\operatorname{Id}}
\newcommand{\dd}{\ \mathrm{d}}
\newcommand{\del}{\partial}
\newcommand{\mf}{\mathcal{F}}
\newcommand{\ms}{\mathcal{S}}
\newcommand{\bu}{\mathbf{u}}
\newcommand{\bS}{\mathbb S}
\newcommand{\bD}{\mathbb D}
\def\hmath$#1${\texorpdfstring{{\rmfamily\textit{#1}}}{#1}}
\title[]{Collision/No-collision results of a solid body with its container in a 3D compressible viscous fluid}
\author{Bum Ja Jin}
\address{Department of Mathematics Education, Mokpo National University, Muan 534-729, South Korea.}
\email{bumjajin@mokpo.ac.kr}
\author{\v S\'arka Ne\v casov\'a}
\address{Institute of Mathematics, Czech Academy of Sciences,
\v Zitn\'a 25, 115 67 Praha 1, Czech Republic.}
\email{matus@math.cas.cz}
\author{Florian Oschmann}
\address{Institute of Mathematics, Czech Academy of Sciences,
\v Zitn\'a 25, 115 67 Praha 1, Czech Republic.}
 \email{oschmann@math.cas.cz}
\author{Arnab Roy}
\address{Basque Center for Applied Mathematics (BCAM), Alameda de Mazarredo 14, 48009 Bilbao, Spain.}
	\address{IKERBASQUE, Basque Foundation for Science, Plaza Euskadi 5, 48009 Bilbao, Bizkaia, Spain.}
\email{aroy@bcamath.org}
\date{\today}
\begin{document}


\begin{abstract}
We consider a bounded domain $\Omega\subset\mathbb R^3$ and a rigid body $\mathcal{S}(t)\subset\Omega$ moving inside a viscous compressible Newtonian fluid. We exploit the body's roughness to establish that the solid  collides with its container within a finite time. We investigate the case when the boundary of the body is of $C^{1,\alpha}$-regularity and show that collision can happen for some suitable range of $\alpha$. We also discuss some no-collision results for the smooth body case when an additional control is added.
\end{abstract}
\maketitle

\bigskip

\noindent{\bf Keywords.} Fluid-structure interaction, Compressible Navier--Stokes, Collision.\\
\noindent {\bf AMS subject classifications.} 35Q30, 76D05, 76N10




\section{Introduction}
We consider the compressible Navier-Stokes equations of a barotropic fluid in a bounded domain $\Omega\subset \mathbb{R}^3$ and a rigid body $\ms(t)$ with center of mass at $\mathbf{G}(t)$ moving inside the fluid, where the fluid domain is $\mf (t)=\Omega\setminus \overline{\ms (t)}$. The equations of motion are given by

\begin{equation}\label{eq1}
\begin{cases}
\partial_t \rho_f+\div(\rho_f \bu_f)=0 & \mbox{ in }\mf(t),\\
\partial_t(\rho_f \bu_f)+\div(\rho_f \bu_f\otimes \bu_f)-\div{\bS(\bu_f)}+\nabla p={ \rho_f {\mathbf f}} & \mbox{ in }\mf(t),\\
\bu_f =\dot{\mathbf G}(t) 
& \mbox{ on } {\partial \ms(t)},\\
\bu_f=0 & \mbox{ on } {\partial \Omega},\\
m\ddot{\mathbf G}=- \int_{\partial \ms}\Big({\bS}(\bu_f)- p{\mathbb I}\Big){\mathbf n} \dd S +{ \int_{\ms}\rho_s {\mathbf f} \dd x},\\
\rho_f (0)=\rho_0,\ \rho_f\bu_f(0)=\mathbf q_0,\ \mathbf{G}(0)= \mathbf G_0,\ \dot{\mathbf G}(0)=\mathbf V_0 
& \mbox{ in }\mf(0).
\end{cases}
\end{equation}
 Here, $\bu_f$ is the fluid's velocity, $\rho_f$ and $\rho_s$ are the fluid's and solid's density, respectively. In the above, $\dot{\mathbf G}(t)$
 is the translational
 velocity of the rigid body. 
 
 The pressure $p$ of the fluid is given by
 \begin{equation}\label{p-law}
 p=p(\rho_f)= (\rho_f)^\gamma\mbox{ for some } \gamma > \frac{3}{2}.
 \end{equation}
 
 The stress tensor satisfies Newton's rheological law
\begin{equation*}
    {\mathbb
S}(\bu)=2\mu \bD(\bu)+\lambda {\mathbb I}\div \bu,
\end{equation*}
 where $\mu>0,\ 2\mu+3\lambda\geq 0$, and $\bD(\bu)=\frac{1}{2}(\nabla \bu+\nabla^T \bu)$ is the symmetric part of the gradient of $\bu$. Further, we assume that $\rho_s>0$ is constant. The mass
 of the rigid body is given by
\begin{equation*}
    m=\rho_s|\ms(0)|. 
\end{equation*}
The centre of mass is defined as
\begin{equation*}
    \mathbf{G}(t)=\frac{1}{m}\int_{\ms(t)}\rho_s x \dd x.
\end{equation*}

\medskip

Understanding the dynamics of a solid body immersed in a fluid is a very active area of research. During the last years, there are many interesting studies focused on the collision/no collision results of a moving body with the boundary of the domain. However, all these available results (see \cite{VaretHill2012, VHW2015, HES, Hillairet2007, HillairetTakahashi2009} and the references therein) are for the \emph{incompressible} fluid-rigid body interaction system only. The aim of the present paper is therefore to deal with viscous \emph{compressible} fluids.

The answer to the question whether or not collision occurs mainly depends on the ``physical roughness'' of the system's boundary, which can be further split into two issues: the shape of the falling body, and the boundary conditions imposed on the solid's and container's wall. This can also be translated into the properties of the velocity gradient. A simple argument reveals that the velocity gradient must become singular (unbounded) at the contact point since otherwise the streamlines would be well defined,
in particular, they could never meet each other.

Experimentally, the influence of the particle roughness on the collision problem had been studied in \cite{joseph2003collisional} and the influence of boundary conditions at the fluid–solid interfaces had been investigated in \cite{lauga2005}. Mathematically, the roughness-induced effect on the collision process in two space dimensions is analyzed in \cite{VaretHillairet2010}. They considered a vertical motion of a $C^{1,\alpha}$ rigid body falling over a horizontal flat surface, and proved that collision happens in finite time if and only if $\alpha<1/2$. The authors in \cite{VaretHill2012} investigated the relation between the roughness and the drag force in more detail. They studied the evolution of a three dimensional rough solid falling towards a rough wall, introducing the roughness by special shapes of the solid's and container's boundary, respectively, or by considering Navier slip conditions. The works \cite{Hillairet2007, HillairetTakahashi2009} deal with the free fall of a rigid sphere over a wall with no-slip conditions at the solid's and container's boundary in two and three spatial dimensions, respectively. In this context, the sphere does not collide with the wall. The effect of Navier slip boundary conditions was analyzed in \cite{VHW2015}. They established that under certain assumptions on the slip coefficients the rigid ball touches the boundary of the domain in finite time. In addition, the authors in \cite{NP} showed that collision appears under the prescription of the motion of a given ball in the case of slip boundary conditions.

Let us also mention the paper \cite{STA1}, where the author 
constructed a collision example of a rigid body with the boundary of the physical 2D domain
resulting from the action of a very singular driving force. On the
other hand, the same author \cite{Staro2004} showed that collisions, if
any, must occur with zero relative {\it translational} velocity
as soon as the boundaries of the rigid objects are smooth and the
gradient of the underlying velocity field is square-integrable - a
hypothesis satisfied by any Newtonian fluid flow of finite energy. The collision of a rigid body with the wall, moving in compressible non-Newtonian fluids or heat-conducting Newtonian fluids, has been  discussed in \cite{nevcasova2024collision}. The impact of fluid compressibility on the contact problem has been examined in \cite{kytomaa1992collision, sundararajakumar1996non}.


Concerning non-Newtonian incompressible fluids, the existence of global-in-time solutions of the motion of several rigid bodies was proven in \cite{FHN}, where, in accordance with \cite{Staro2004},
collisions of two or more rigid objects do not appear in finite
time unless they were present initially.

In this paper we want to discuss the collision phenomenon of a moving rigid body with the boundary of its container when the container is filled with a viscous \emph{compressible} fluid. Precisely, we want to construct an example in which the rigid body collides with the boundary of the domain filled by the compressible Newtonian fluid. We want to exploit the roughness of the body to achieve this collision result, thus considering the case when the boundary of the body is of $C^{1,\alpha}$-regularity and searching for a suitable range for the exponent $\alpha$ such that the collision can happen. We also present an instance where a no-collision result holds for a smooth rigid body under an additional control force.



\subsection{Weak formulation and Main result}

Let us introduce the notion of weak solutions to problem \eqref{eq1}.
First, we extend $\rho$ and $\bu$ to the whole of $\mathbb{R}^3$ via
\begin{align}\label{extended}
\rho = \begin{cases}
\rho_f & \mbox{ in }\mf(t), \\
\rho_s & \mbox{ in }\ms(t), \\
0 & \mbox{ in }\mathbb{R}^3 \setminus \Omega,
\end{cases}
\quad \bu=\begin{cases}
\bu_f & \mbox{ in }\mf (t), \\
\dot{\mathbf G}(t) 
& \mbox{ in }\ms (t), \\
0 & \mbox{ in }\mathbb{R}^3 \setminus \Omega.
\end{cases}
\end{align}
Then we consider the following notion of weak solutions:
\begin{Definition}\label{weaksol:def}
A triplet $(\rho, \bu, \mathbf{G})$ is a \emph{renormalized finite energy weak solution} to \eqref{eq1} if:
\begin{itemize}
\item The solution belongs to the regularity class
\begin{gather*}
\rho \geq 0, \quad
\rho \in L^{\infty}(0,T; L^{\gamma}(\Omega)) \cap C([0,T];L^1(\Omega)),\quad
 \bu\in L^2(0,T; W_0^{1,2}(\Omega)),
\\
\mathbf G\in W^{1,\infty}(0,T),\quad \bu=\dot{\mathbf G}(t) 
\mbox{ in }\ms(t);
\end{gather*}

\item The weak form of the continuity and renormalized continuity equation hold:
\begin{gather*}
\int_0^T \int_{\mathbb{R}^3} \left[\rho \del_t \phi+(\rho \bu)\cdot \nabla \phi\right] \dd x \dd t =0, \label{weak density}\\
\int_0^T \int_{\mathbb{R}^3} \left[b(\rho) \partial_t \phi+(b(\rho) \bu)\cdot \nabla \phi+\left(b(\rho)-b'(\rho)\rho\right)\div \bu\,\phi\right] \dd x \dd t =0, \label{renormalized}
\end{gather*}
for any $\phi \in C_c^{\infty}((0,T)\times \mathbb{R}^3)$ and for any $b \in C^1(\mathbb{R})$ such that $b'(z)=0$ for $z$ large enough;

\item The weak form of the momentum equation holds for a.e.~$t\in [0,T]$:
\begin{multline}\label{momentum_weak}
\int_0^t \int_\Omega \left[(\rho\bu)\cdot \partial_t \phi+ (\rho\bu \otimes \bu) : \bD(\phi) + p(\rho) \div \phi - \bS(\bu) : \bD(\phi) + \rho\mathbf{f}\cdot\phi\right] \dd x \dd t\\
= \int_\Omega [\rho(t)\bu(t)\cdot \phi(t) - \mathbf q_0\cdot \phi(0)]\dd x
 \end{multline}
for any $\phi \in C_c^\infty([0,T)\times \Omega)$ with $\phi(t,y)=\ell_\phi(t)$ 
in a neighborhood of $\ms(t)$; 

\item The following energy inequality holds for a.e.~$t \in [0,T]$:
\begin{align}\label{StdEI}
\begin{split}
\int_\Omega\left( \frac12 \rho(t,x)|\bu(t,x)|^2+\frac{\rho^\gamma(t,x)}{\gamma-1} \right) \dd x + \int_0^t\int_\Omega\left(2\mu |\bD(\bu)|^2+\lambda|\div\bu|^2\right)(\tau, x) \dd x \dd \tau\\
\leq \int_\Omega \left( \frac12 \frac{|\mathbf q_0(x)|^2}{\rho_0(x)} + \frac{\rho_0^\gamma(x)}{\gamma-1}\right)\dd x + \int_0^t\int_\Omega \rho\mathbf{f}\cdot\bu \dd x,
\end{split}
\end{align}
where we extended $\rho,\mathbf u$ as in \eqref{extended}, and $\mathbf q_0=\rho(0) \bu(0)$.
\end{itemize}
\end{Definition}

Let us remark that the regularity classes of $\rho$ and $\bu$ immediately imply
\begin{align}\label{bdRu}
    \rho\bu \in L^\infty(0,T;L^\frac{2\gamma}{\gamma+1}(\Omega)).
\end{align}
Before stating precisely the well-posedness result of system \eqref{eq1}, we want to mention previous mathematical results related to the existence theory of solutions to this system. Regarding the evolution of a system of a rigid body in a compressible fluid with Dirichlet boundary conditions,
the existence of a weak solution up to collision is proved in \cite{DEES2}. In \cite{Feireisl2004} this result was generalized to allow also for collisions. Existence of weak solutions for Navier slip boundary conditions at the interface as well as the boundary of the domain has been established in \cite{NRRS22}. The existence of strong solutions was studied in \cite{HMTT, roy2019stabilization}.
\footnote{The mathematical analysis of systems describing the motion of a rigid body in a viscous \textit{incompressible} fluid is nowadays well developed.  The proof of existence of weak solutions until a first collision can be found in several papers, see e.g.~\cite{DEES1,GLSE}. Later, the possibility of collisions in the case of weak solutions was included, see e.g.~\cite{F3, SST}.}\\

We now state the existence result for system \eqref{eq1} which can be established by following \cite[Theorem~4.1]{Feireisl2004}:
\begin{Theorem}\label{exist:upto_collision}
  Let $\Omega$ and $\ms_0$ be two bounded domains of $\mathbb{R}^3$.
  Let $p$ be defined through \eqref{p-law}, and $\mathbf{f}=\nabla F$ with $F = -gx_3$, where $g>0$ is the acceleration due to gravity. Assume that the initial data satisfy 
  \begin{align} \label{init}
  \rho_0 \in L^\gamma(\Omega),\quad \rho_0 \geq 0 &\mbox{ a.e.~in }\Omega,\\
  \label{init1}
  \mathbf q_0 \in L^\frac{2\gamma}{\gamma+1}(\Omega), \quad \mathbf q_0\mathds{1}_{\{\rho_0=0\}}=0 &\mbox{ a.e.~in }\Omega,\quad \dfrac{|\mathbf q_0|^2}{\rho_0}\mathds{1}_{\{\rho_{0}>0\}}\in L^1(\Omega),\\ \label{init2}
  \bu_0= \mathbf V_0 
  &\mbox{ on }\ms_0 \mbox{ with } \mathbf V_0, 
  \mathbf G_0 \in \mathbb{R}^3.
  \end{align}
Then the system \eqref{eq1} admits a weak solution in the sense of Definition \ref{weaksol:def}.
  \end{Theorem}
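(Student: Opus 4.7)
The strategy is to follow the penalization scheme of Feireisl [2004]: replace the rigid body by a very viscous fluid occupying $\ms(t)$ and pass to the limit. Concretely, for a small parameter $\delta>0$, I would introduce a coupled problem on the whole of $\Omega$ of the form
\begin{equation*}
\del_t(\rho_\delta\bu_\delta)+\div(\rho_\delta\bu_\delta\otimes\bu_\delta)-\div\bS_\delta(\bu_\delta)+\nabla p(\rho_\delta)=\rho_\delta\mathbf f,
\end{equation*}
with a variable viscosity $\mu_\delta(t,x)=\mu+\delta^{-1}\mathds{1}_{\ms(t)}(x)$ (suitably regularized) and $\rho_\delta(0)=\rho_0\mathds{1}_{\Omega\setminus\ms_0}+\rho_s\mathds{1}_{\ms_0}$. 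The body's position $\mathbf G_\delta(t)$ would be prescribed through the averaged velocity $\dot{\mathbf G}_\delta(t)=|\ms(t)|^{-1}\int_{\ms(t)}\bu_\delta\dd x$, determining an ODE for the rigid trajectory that is coupled to the momentum equation.

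For each fixed $\delta>0$, existence of a weak solution to the penalized system follows from the Lions--Feireisl theory with an additional layer of regularization: add an artificial pressure $\delta\rho^\beta$ with $\beta$ large, regularize the continuity equation by $\varepsilon\Delta\rho$, and apply a Faedo--Galerkin scheme for the momentum equation on $\Omega$. Sequential limits $n\to\infty$, $\varepsilon\to 0$, then in the artificial pressure produce a weak solution of the penalized system satisfying the renormalized continuity equation and an energy inequality compatible with \eqref{StdEI}. The moving interface $\partial\ms(t)$ and the coupling with $\dot{\mathbf G}_\delta$ introduce minor technical changes compared to the standard scheme, but because $\ms(t)$ is a rigid translate of $\ms_0$, its geometry and regularity are preserved and a fixed-point argument in $\mathbf G_\delta$ closes the coupling on small time intervals.

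I would then pass to the limit $\delta\to 0$. The energy inequality, which is uniform in $\delta$ (since $\mathbf f=-g\nabla x_3$ is conservative, so that the work term is controlled), provides the bounds
\begin{equation*}
\sqrt{\rho_\delta}\bu_\delta\in L^\infty(0,T;L^2(\Omega)),\quad \rho_\delta\in L^\infty(0,T;L^\gamma(\Omega)),\quad \nabla\bu_\delta\in L^2(0,T;L^2(\Omega)),
\end{equation*}
together with $\delta^{-1/2}\bD(\bu_\delta)\mathds{1}_{\ms(t)}\in L^2$. The main obstacle is strong convergence of the densities $\rho_\delta$, which I would obtain through the Lions--Feireisl effective viscous flux identity adapted to the variable viscosity and Feireisl's oscillation defect measure, using $\gamma>3/2$ to close the integrability of $\rho_\delta^{\gamma+\theta}$ on compact sets of the fluid region. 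Propagation of the renormalization property then upgrades weak to strong $L^1$ convergence of $\rho_\delta$, which allows passage to the limit in $p(\rho_\delta)$ and in $\rho_\delta\bu_\delta\otimes\bu_\delta$ via Aubin--Lions.

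Finally, the penalization bound $\delta^{-1/2}\bD(\bu_\delta)\mathds{1}_{\ms(t)}\in L^2$ gives $\bD(\bu)=0$ on $\ms(t)$ in the limit; together with $\bu=0$ on $\partial\Omega$, and discarding the rotational component in line with the pure-translation ansatz of \eqref{eq1}, this yields $\bu(t,\cdot)=\dot{\mathbf G}(t)$ on $\ms(t)$. The test function class in \eqref{momentum_weak} — namely $\phi$ constant in a neighborhood of $\ms(t)$ — is compatible with the limit because rigid test functions are admissible in the penalized problem. The initial data are recovered in the usual weak sense, and \eqref{StdEI} is preserved by weak lower semicontinuity of the dissipation. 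The hardest point throughout is controlling the density concentrations when $\ms(t)$ approaches $\partial\Omega$: as in Feireisl's 2004 paper, this is accommodated by extending $\bu$ and $\rho$ to all of $\Omega$ as in \eqref{extended}, so that no boundary compatibility is lost if a collision occurs; the proof thus delivers global weak solutions, possibly past the first contact.
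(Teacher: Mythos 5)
First, note that the paper does not actually prove Theorem \ref{exist:upto_collision}: it only asserts that the result ``can be established by following \cite[Theorem~4.1]{Feireisl2004}''. Your plan --- approximate the rigid body by a region of very large viscosity, solve each approximate problem by the Lions--Feireisl machinery (Faedo--Galerkin, artificial pressure $\delta\rho^\beta$, $\varepsilon\Delta\rho$, effective viscous flux and oscillation defect measure, renormalization), and let the penalization parameter tend to zero --- is exactly the route of that reference, so in spirit you are reproducing the intended argument rather than inventing a different one.

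There is, however, one step in your sketch that does not deliver a weak solution in the sense of Definition \ref{weaksol:def} as stated. The high-viscosity penalization only yields $\bD(\bu)=0$ on $\ms(t)$ in the limit, i.e.\ $\bu=\dot{\mathbf G}(t)+\omega(t)\times(x-\mathbf G(t))$ there; Definition \ref{weaksol:def} requires the stronger constraint $\bu=\dot{\mathbf G}(t)$ on $\ms(t)$, and ``discarding the rotational component'' is not a limit operation --- nothing in your scheme forces $\omega=0$. This also creates an internal inconsistency in your approximation: you transport $\ms_\delta(t)$ by the averaged (purely translational) velocity $\dot{\mathbf G}_\delta$, while the continuity equation transports $\rho_\delta$ (and hence the region where $\rho_\delta=\rho_s$) by $\bu_\delta$, which the penalization drives toward a possibly rotating rigid field; without an argument that these two transports coincide in the limit, the identification of $\ms(t)$, the constancy of $\rho$ on it, and the recovery of the solid momentum balance through test functions $\phi$ constant near $\ms(t)$ in \eqref{momentum_weak} all become unclear. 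The fix is to build the translational constraint into the approximation, e.g.\ replace (or supplement) the viscous penalization by a Brinkman-type term $\tfrac1\delta\int_{\ms_\delta(t)}|\bu_\delta-\dot{\mathbf G}_\delta|^2\dd x$, or project onto constant-in-space rigid fields in the Galerkin spaces, so that the limit velocity is forced to equal $\dot{\mathbf G}(t)$ on $\ms(t)$; alternatively, prove the theorem with rotation included (as in \cite{Feireisl2004}) and state the translational model as the special case the paper actually analyzes. As written, the rigidity-recovery step is a genuine gap; the rest of the outline (uniform energy bounds with the conservative force, strong convergence of the density via the effective viscous flux with piecewise-constant viscosity, validity past contact) is consistent with the cited construction.
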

In this paper, we study whether the solid body can collide with the boundary $\del \Omega$ in finite time or not, that is, whether the maximal existence time $T_*$ of a weak solution is finite or not. We consider a $C^{1,\alpha}$ solid moving vertically over a flat horizontal surface under the influence of gravity. More precisely, we make the following assumptions (see Figure~\ref{fig1} for the main notations):
\begin{enumerate}
\item The source term is provided by the gravitational force ${\mathbf f}=-g{\mathbf e}_3$ and $g>0$. \label{a1}
\item The solid moves along the vertical axis $\{x_1=x_2=0\}$. \label{a2}
\item The only possible collision point is at $x=0$, and the solid's motion is a vertical translation.\label{a3}
\item Near $r=0$, $\partial \Omega$ is flat and horizontal, where $r=\sqrt{x_1^2+x_2^2}$. \label{a4}
\item Near $r=0$, the lower part of $\partial \ms(t)$ is given by \label{a5}
\begin{align*}
    x_3={ h(t)+r^{1+\alpha}},\ r\leq 2r_0\mbox{ for some small enough } r_0 \in (0,1).
\end{align*}
\item The collision just happens near the flat boundary of $\Omega$: \label{a6}
\[
\inf_{t>0}\mbox{\rm dist} \left(\ms(t),\partial\Omega \setminus [-r_0, r_0]^2\times\{0\}\right)\geq d_0>0.
\]

\end{enumerate} 

Let us also assume that the position of the solid is characterized by its height $h(t)$, in the sense that ${\mathbf G}(t)={\mathbf G}(0)+(h(t)-h(0)){\mathbf e}_3$, and $\ms(t)=\ms(0)+(h(t)-h(0)){\mathbf e}_3$. We emphasize that this means the body will not rotate during the free fall.

\begin{figure}\label{fig1}
\centering
\begin{tikzpicture}[scale=.7]
\draw[->] (-6,0) -- (6,0);
\draw[->] (0,0) -- (0,7);
\node at (6,0) [anchor=north] {$r$};
\node at (0,7) [anchor=east] {$x_3$};
\draw (-5,0) rectangle (5,6);
\node at (-6,4) [anchor=west] {$\del \Omega$};
\draw[black, thick] plot [smooth cycle] coordinates {(0,1) (1,1.5) (2,3) (3,5) (0,4) (-3,5) (-2,3) (-1,1.5)};
\draw[<->] (0,0) -- (0,1);
\node at (-1,3) {$\ms$};
\node at (-3,1) {$\mf$};
\node at (0,.5) [anchor=west] {$h$};
\node at (2,2) [anchor=west] {$x_3=h+r^{1+\alpha}$};
\draw[->] (2,2) -- (.75,2) -- (.75,1.4);
\draw[dashed] (-1,0) -- (-1,1.5);
\draw[dashed] (1,0) -- (1,1.5);
\node at (1,0) [anchor=north] {$2r_0$};
\draw[dotted] (-.5,0) -- (-.5,1.1);
\draw[dotted] (.5,0) -- (.5,1.1);
\node at (-.5,0) [anchor=north] {$-r_0$};
\end{tikzpicture}
\caption{The body $\ms$ and fluid $\mf$ in the container $\Omega$}
\end{figure}

Our main result regarding collision now reads as follows:

\begin{Theorem} \label{theo1}

Let $0<\alpha\leq 1$ and $\Omega,\ \ms\subset\mathbb R^3$ be bounded domains of class $C^{1,\alpha}$.
Let $(\rho, \bu, {\mathbf G})$ be a renormalized finite energy weak solution of the compressible Navier-Stokes equations \eqref{eq1} in the sense of Definition \ref{weaksol:def} satisfying the assumptions \eqref{a1}--\eqref{a6}. If the solid's mass is large enough, and its initial vertical
velocity is small enough, then the solid touches $\partial \Omega$ in finite time provided
\begin{align*}
   \gamma>3 \quad \text{and} \quad \alpha<\min \bigg\{ \frac13, \frac{3(\gamma-3)}{4\gamma+3} \bigg\}. 
\end{align*}
\end{Theorem}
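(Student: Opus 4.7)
The plan is to derive a differential inequality for the height $h(t)$ by testing the weak momentum equation \eqref{momentum_weak} against a divergence-free vector field $\phi_h$ that equals $\mathbf{e}_3$ on the solid, and then to extract from it a finite collision time. Following the incompressible 3D constructions of Gerard--Varet and Hillairet, I would build, for each small $h>0$, an axisymmetric field $\phi_h\in W_0^{1,\infty}(\Omega;\mathbb R^3)$ with $\phi_h=\mathbf{e}_3$ on the translated body, $\phi_h=0$ in a neighborhood of $\partial \Omega$ compatible with assumption (a6), and $\div\phi_h=0$. Inside the gap $\{r<2r_0,\ 0<x_3<\delta(r)\}$ with $\delta(r)=h+r^{1+\alpha}$, I would prescribe a cubic vertical profile $\phi_h^3(r,x_3)=P(x_3/\delta(r))$ with $P(0)=0$, $P(1)=1$, and determine the radial component from incompressibility via $\phi_h^r(r,x_3)=-\tfrac{1}{r}\int_0^r r'\,\partial_{x_3}\phi_h^3(r',x_3)\,dr'$. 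The leading contribution to $\nabla\phi_h$ in the gap comes from $\partial_{x_3}\phi_h^r\sim r/\delta(r)^2$, giving
\begin{equation*}
\|\nabla\phi_h\|_{L^2}^2\lesssim \int_0^{r_0}\frac{r^3}{\delta(r)^3}\,dr,
\end{equation*}
which is uniformly bounded as $h\to 0$ precisely when $\alpha<\tfrac{1}{3}$ (the first threshold of the theorem).

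Plugging $\phi(s,x)=\eta(s)\,\phi_{h(s)}(x)$ into \eqref{momentum_weak} with a temporal cutoff $\eta$, using $\bu(s,\cdot)=\dot h(s)\mathbf{e}_3$ and $\phi_{h(s)}=\mathbf{e}_3$ on $\ms(s)$ to evaluate the body integrals as $m\dot h(t)$ and $-gmt$, and noting that $\int p(\rho)\div\phi_{h(s)}=0$ by construction, one extracts an identity
\begin{equation*}
m\dot h(t)=m\mathbf{V}_0\cdot \mathbf{e}_3-gmt+\mathcal R(t),
\end{equation*}
where $\mathcal R(t)$ collects the viscous drag $-\int_0^t\!\int\bS(\bu):\bD\phi_{h(s)}$, the convective term $\int_0^t\!\int\rho\bu\otimes\bu:\bD\phi_{h(s)}$, the time-derivative contribution $\int_0^t\!\int\rho\bu\cdot\dot h(s)\,\partial_h\phi_h|_{h(s)}$, and the fluid portions of the gravity and boundary data. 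The viscous drag is bounded via Cauchy--Schwarz and \eqref{StdEI} as $\lesssim 1+t$. The convective term is the delicate one: Sobolev embedding yields $\bu\in L^2_t L^6_x$, hence $\rho\bu\otimes\bu\in L^1_t L^{3\gamma/(\gamma+3)}_x$ by H\"older and \eqref{bdRu}, and the pairing with $\bD\phi_{h(s)}$ calls for $\|\bD\phi_h\|_{L^{3\gamma/(2\gamma-3)}(\mathrm{gap})}$ to remain bounded uniformly in $h$. A direct computation of the corresponding gap integral with $\delta(r)=h+r^{1+\alpha}$, using the same Poiseuille scaling, shows that this happens exactly when $\alpha<\tfrac{3(\gamma-3)}{4\gamma+3}$, and the positivity of this threshold forces $\gamma>3$. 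Combining the bounds yields $m\dot h(t)\leq m V_{0,3}-gmt+C(1+t)$; integrating once more gives $h(t)\leq h(0)+V_{0,3}t-\tfrac{1}{2}gt^2+O((1+t^2)/m)$, and for $|\mathbf V_0|$ small and $m$ large this upper bound becomes nonpositive at some finite $t_*$, forcing collision at or before $t_*$.

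The main obstacle is handling the time-derivative piece $\int\rho\bu\cdot\dot h(s)\,\partial_h\phi_h|_{h(s)}$, which is linear in the unknown $\dot h$ and acts as an ``added-mass''-type feedback; showing it can be absorbed on the left-hand side rather than trapping $\dot h$ at zero requires the largeness of $m$ (via the kinetic-energy bound $|\dot h(s)|\lesssim\sqrt{(1+s)/m}$ from the body part of \eqref{StdEI}) and sharp gap estimates for $\partial_h\phi_h$ in $L^{2\gamma/(\gamma-1)}$. A related subtlety is that, compared with the incompressible case, $\rho$ is only in $L^\infty L^\gamma$, so each gap integral of $\bD\phi_h$ in various Lebesgue exponents has to be tracked precisely; the interplay between the Sobolev exponent for $\bu$, the raw integrability of $\rho$, and the cubic geometry $\delta(r)=h+r^{1+\alpha}$ is what fixes both thresholds on $(\alpha,\gamma)$ in the statement. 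Finally, one needs a standard mollification/approximation argument to justify using the merely Lipschitz, time-dependent test field $\phi_{h(t)}$ in the smooth-test-function weak formulation of Definition \ref{weaksol:def}.
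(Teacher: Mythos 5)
Your construction and bookkeeping are essentially the paper's: the same axisymmetric stream-function test field with cubic profile $\Phi(t)=t^2(3-2t)$ and gap function $h+r^{1+\alpha}$, the same gap estimates (the $L^2$ bound on $\nabla\mathbf w_h$ giving $\alpha<\tfrac13$, the $L^{3\gamma/(2\gamma-3)}$ bound for the convective pairing giving $\alpha<\tfrac{3(\gamma-3)}{4\gamma+3}$), and the same mechanism of concluding collision for large $m$ and small initial vertical velocity. The only structural difference is cosmetic: you keep the solid momentum $m\dot h(t)$ exactly and integrate twice to force $h$ to vanish before an explicit $t_*$, whereas the paper bounds the solid contributions by $\sqrt{m}(E_0+L)^{1/2}$ via the energy inequality and shows directly that any pre-collision time $T$ satisfies $gT\le C(1+T)$ with $C<g$, so $T_*<\infty$; both variants work.

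There is, however, one concrete step in your outline that does not reach the stated range of $\alpha$: your treatment of the added-mass term $\int_0^T\dot h\int\rho\bu\cdot\partial_h\mathbf w_h$, which you propose to close with $\rho\bu\in L^\infty_t L^{2\gamma/(\gamma+1)}_x$ against $\|\partial_h\mathbf w_h\|_{L^{2\gamma/(\gamma-1)}}$. By the gap computation (Lemma \ref{BdsWh}) this requires $\tfrac{2\gamma}{\gamma-1}<\tfrac{3+\alpha}{1+2\alpha}$, i.e.\ $\alpha<\tfrac{\gamma-3}{3\gamma+1}$, which is strictly smaller than $\tfrac{3(\gamma-3)}{4\gamma+3}$ for every $\gamma>3$ (for $\gamma=6$: $3/19$ versus the allowed $1/3$), so as written your argument proves the theorem only on a reduced $\alpha$-interval. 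The fix is the paper's H\"older splitting for this term: use $\rho\in L^\infty_t L^\gamma(\mf(\cdot))$, $\bu\in L^2_tL^6_x$, and $\|\partial_h\mathbf w_h\|_{L^{6\gamma/(5\gamma-6)}}$, which only needs $\alpha<\tfrac{9(\gamma-2)}{7\gamma+6}$ and is implied by the stated constraint. Two related small points: the density bound \eqref{re:energy} controls $\rho$ only on the fluid region (on the solid $\rho=\rho_s$ is large), so in every density-weighted term you must use that $\bD(\mathbf w_h)=0$ and $\partial_h\mathbf w_h=0$ on $\ms_h$ to kill the solid part before invoking it; and the bound on $\dot h$ is time-uniform, $|\dot h|\le\sqrt{2(E_0+L)/m}$, since gravity is absorbed through its potential as in Proposition \ref{prop:EI} — no $(1+s)$ growth factor is needed.
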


\begin{Remark}
The terms ``large enough'' and ``small enough'' should be interpreted in such a way that inequality \eqref{finalIneq} is satisfied.
\end{Remark}

\begin{Remark}
Let us mention a few facts about the above constraint. First, the two expressions inside the minimum stem, as one shall expect, from estimating the diffusive and convective parts, respectively. Second, the first fraction inside the minimum wins precisely if $\gamma \geq 6$. This seems to be optimal in the sense that $\alpha=\frac13$ is the critical value for the incompressible case, which would (loosely speaking) correspond to $\gamma=\infty$ (see \cite{VaretHill2012} for details).
\end{Remark}

\begin{Remark}
In contrast to \cite{VaretHillairet2010, VaretHill2012}, we do not know whether or not collision can occur above the given threshold for $\alpha$. We will comment this issue in more detail at the end of Section~\ref{sec3}.
\end{Remark}

\begin{Remark}
We remark that the above result also holds for a rotational setting, that is, for the velocity on $\ms$ being of the form $\bu = \dot{\mathbf G}(t) + \omega(t) \times (x-\mathbf G(t) )$, together with obvious modifications in the system \eqref{eq1}, provided the initial rotational velocity $\omega(0)$ is small enough. For this case, however, we need to \emph{assume} that the body moves just vertically without rotating during the free fall.
\end{Remark}

\noindent
\textbf{Organization of the paper.} In Section \ref{sec2}, we give \textit{a priori} bounds on the velocity and the density needed for the sequel. Finally, Section \ref{sec3} is devoted to the construction of an appropriate test function and to the proof of Theorem \ref{theo1}. In Section \ref{sec4}, we will discuss some particular scenarios of no-collision between a smooth body and the boundary.



\section{A priori bounds and energy estimates}\label{sec2}
From the energy inequality \eqref{StdEI} we obtain \textit{a priori} bounds on the density and velocity.
\begin{Proposition} \label{prop:EI}

Let $\gamma>1$ and $({\rho}, {\bu}, \mathbf{G})$ be a weak solution to \eqref{eq1} on $(0,T)\times\Omega$ with external force $\mathbf{f}=\nabla F \in L^\frac{2\gamma}{\gamma-1}(\Omega)$, where $F\in L^\frac{\gamma}{\gamma-1}(\Omega)$ is a time-independent potential. Then,
\begin{align}
\label{eq2}
\begin{split}
&\sup_{t\in (0,T)} \int_{\mf(t)} \bigg(\frac{1}{2}{\rho}|\bu|^2+\frac{\rho^\gamma}{2(\gamma-1)}\bigg) \dd x + \sup_{t\in (0,T)} \int_{\ms(t)} \frac12 \rho |\bu|^2 \dd x \\
&\quad + \int_0^T\int_{\mf(t)} \big( 2\mu|\bD(\bu)|^2+\lambda|\div\bu|^2 \big) \dd x \dd t\\
&\leq \int_{\mf(0)}\bigg(\frac{1}{2}\frac{|\mathbf q_0|^2}{{\rho}_0} + \frac{\rho_0^\gamma}{\gamma-1}\bigg) \dd x+\frac{m}{2} |\mathbf V_0|^2 
+ C(\gamma)\, \|F\|_{L^\frac{\gamma}{\gamma-1}(\Omega)}^\frac{\gamma}{\gamma-1},
\end{split}
\end{align}
where $C(\gamma)=2^{1/(\gamma-1)}(2-2/\gamma)^{\gamma/(\gamma-1)}\leq 3$.
\end{Proposition}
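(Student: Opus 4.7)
My plan is to begin from the weak energy inequality \eqref{StdEI} of Definition \ref{weaksol:def} and to perform three successive manipulations: rewriting the gravitational work term via the continuity equation, splitting all integrals between fluid and solid regions, and applying Young's inequality to absorb the resulting pressure-potential coupling.

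The first step is to handle $\int_0^t\!\int_\Omega \rho\,\mathbf f\cdot\bu\dd x\dd\tau$. Since $\mathbf f = \nabla F$ is time-independent and $F\in L^{\gamma/(\gamma-1)}(\Omega)$, I would use the potential $F$ itself as a test function in the weak continuity equation (after a standard time mollification over $[0,t]$ and the trivial spatial extension \eqref{extended} to $\mathbb R^3$, to comply with the compact-support requirement), invoking $\rho\in C([0,T];L^1(\Omega))$ and the bound \eqref{bdRu} on $\rho\bu$ to pass to the limit. This yields the identity
\begin{equation*}
\int_0^t\!\!\int_\Omega \rho\,\mathbf f\cdot\bu\dd x\dd\tau = \int_\Omega \rho(t)\,F\dd x - \int_\Omega \rho_0\,F\dd x.
\end{equation*}

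Next I would split every integral on $\Omega$ between $\mf(t)$ and $\ms(t)$. On $\ms(t)$ the velocity is the rigid translation $\dot{\mathbf G}(t)$ and the density equals the constant $\rho_s$, so $\bD(\bu)=0$, $\div\bu=0$, and the dissipation contribution from $\ms(t)$ vanishes. Moreover
\begin{equation*}
\int_{\ms(t)}\tfrac12\rho|\bu|^2\dd x = \tfrac12 m|\dot{\mathbf G}(t)|^2,\qquad \int_{\ms(t)}\tfrac{\rho^\gamma}{\gamma-1}\dd x = \tfrac{\rho_s^\gamma|\ms(0)|}{\gamma-1},
\end{equation*}
and the latter is constant in time (since $|\ms(t)|=|\ms(0)|$) and therefore cancels between the bulk and initial-data sides; the initial solid kinetic energy contributes exactly $\tfrac12 m|\mathbf V_0|^2$, matching the third term on the right-hand side of \eqref{eq2}.

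Finally, I would estimate $\int_\Omega\rho(t)F$ and $\int_\Omega\rho_0 F$ via Young's inequality with conjugate exponents $(\gamma,\gamma/(\gamma-1))$, tuning the parameter so that the coefficient of $\rho^\gamma$ equals $1/(2(\gamma-1))$. Half of the fluid pressure energy on the left is then absorbed, and the remaining potential terms deposit on the right precisely the quantity $C(\gamma)\,\|F\|_{L^{\gamma/(\gamma-1)}}^{\gamma/(\gamma-1)}$ with the stated explicit constant. I do not expect any conceptual obstacle: the only delicate point is the mollification/extension argument in Step 1, which is entirely routine given the regularity class prescribed in Definition \ref{weaksol:def}.
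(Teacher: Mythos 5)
Your overall strategy coincides with the paper's: rewrite the force term through the continuity equation tested with the potential $F$, split all integrals into fluid and solid parts, and absorb the potential contribution into half of the fluid pressure via Young's inequality; also your identifications $\int_{\ms(t)}\tfrac12\rho|\bu|^2\dd x=\tfrac m2|\dot{\mathbf G}|^2$, the cancellation of the constant solid pressure term, and $\int_{\ms(0)}\tfrac{|\mathbf q_0|^2}{2\rho_0}\dd x=\tfrac m2|\mathbf V_0|^2$ are exactly as in the paper, and the mollification/extension needed to use $F$ as a test function is the same tacit step the paper performs. The genuine gap is in your final step: you apply Young's inequality to the \emph{whole-domain} integrals $\int_\Omega\rho(t)F\dd x$ and $\int_\Omega\rho_0F\dd x$. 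On the solid region this produces terms of order $\tfrac{1}{2(\gamma-1)}\rho_s^\gamma|\ms|=\tfrac{1}{2(\gamma-1)}\rho_s^{\gamma-1}m$ which cannot be absorbed, because the left-hand side of \eqref{eq2} contains no solid pressure energy (only the solid kinetic energy), and no such term appears on the right-hand side of \eqref{eq2} either. Thus your argument yields a strictly weaker inequality with an extra $\rho_s$-dependent constant rather than \eqref{eq2} with the stated $C(\gamma)\|F\|_{L^{\gamma/(\gamma-1)}}^{\gamma/(\gamma-1)}$. This is not cosmetic: in the proof of Theorem \ref{theo1} the estimate is used in the regime of large $m$ (equivalently large $\rho_s$), and an additional right-hand term growing superlinearly in $m$ would destroy the large-mass argument.

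The paper avoids this by disposing of the solid contributions \emph{at the level of the identity}, before any estimation: after integrating by parts it passes from $\int_\Omega\big(\rho(T)F-\rho_0F\big)\dd x$ to $\int_{\mf(T)}\rho(T)F\dd x-\int_{\mf(0)}\rho_0F\dd x$, invoking the constancy of $\rho|_{\ms}=\rho_s$, and only then applies H\"older and Young with $\|\rho\|_{L^\infty(0,T;L^\gamma(\mf(\cdot)))}$, which is precisely the quantity available (with halved coefficient) on the left. To close your proof you must do the same: separate the solid parts $\rho_s\int_{\ms(T)}F\dd x-\rho_s\int_{\ms(0)}F\dd x$ from the fluid parts in the force identity and treat them there, rather than feeding them into Young's inequality. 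Be aware that this is the delicate point of either route: these solid terms represent the change of the body's potential energy, so their removal is not automatic from $\rho|_{\ms}$ being constant alone, and any honest treatment must address exactly this step, which your proposal currently bypasses.
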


\begin{Remark}
We remark that $\mathbf f=\nabla F\in L^\frac{2\gamma}{\gamma-1}(\Omega)$, together with the bound \eqref{bdRu}, imply that the force term $\int_\Omega \rho\mathbf f\cdot\bu \dd x$ is well-defined.
\end{Remark}
\begin{proof}[Proof of Proposition \ref{prop:EI}]
By the energy inequality \eqref{StdEI}, we have
\begin{multline*}
\int_\Omega\Big(\frac{1}{2}{\rho}|\bu|^2+ \frac{\rho^\gamma}{\gamma-1}\Big) \dd x
+\int_0^T\int_{\mf(t)} \big( 2\mu|\bD(\bu)|^2+\lambda|\div\bu|^2 \big) \dd x \dd t\\
\leq \int_\Omega\bigg( \frac{1}{2}\frac{|\mathbf q_0|^2}{{\rho}_0} + \frac{\rho_0^\gamma}{\gamma-1}\bigg) \dd x + \int_0^T\int_\Omega \rho\mathbf{f}\cdot\bu \dd x \dd t.
\end{multline*}
We split the pressure term in the first integral to get
\begin{align*}
    \int_\Omega \frac{\rho^\gamma}{\gamma-1} \dd x - \int_\mf \frac{\rho^\gamma}{\gamma-1} \dd x = \int_\ms \frac{\rho^\gamma}{\gamma-1} \dd x = \frac{\rho_s^\gamma}{\gamma-1} |\ms| = \frac{\rho_s^{\gamma-1}}{\gamma-1} m,
\end{align*}
where we used that $\rho|_\ms=\rho_s$ is constant, and the mass of $\ms$ is independent of time. Obviously, the same holds if we replace $\rho$ by $\rho_0$. Thus, the energy inequality turns into
\begin{align}\label{eq2-1}
\begin{split}
    \int_{\mf(t)} \Big(\frac{1}{2}{\rho}|\bu|^2+ \frac{\rho^\gamma}{\gamma-1}\Big) \dd x + \int_{\ms(t)} \frac12 \rho |\bu|^2 \dd x + \int_0^T\int_{\mf(t)} \big( 2\mu|\bD(\bu)|^2+\lambda|\div\bu|^2 \big) \dd x \dd t\\
\leq \int_{\mf(0)}  \bigg( \frac{1}{2}\frac{|\mathbf q_0|^2}{\rho_0} + \frac{\rho_0^\gamma}{\gamma-1}\bigg) \dd x + \int_{\ms(0)} \frac{1}{2}\frac{|\mathbf q_0|^2}{\rho_0} \dd x + \int_0^T\int_\Omega \rho\mathbf{f}\cdot\bu \dd x \dd t.
\end{split}
\end{align}

Next we note that the continuity equation is satisfied in the whole of $\Omega$ due to the specific form of $\bu$ on $\ms$. Since $\mathbf f=\nabla F$ and $F\in L^\frac{\gamma}{\gamma-1}(\Omega)$ is time-independent, we obtain
\begin{align*}
    \int_0^T\int_\Omega \rho\bu\cdot\mathbf f \dd x \dd t &=\int_0^T\int_\Omega \rho\bu\cdot\nabla F \dd x \dd t=-\int_0^T \int_\Omega \div(\rho\bu) F \dd x \dd t\\
    &=\int_0^T \int_\Omega \del_t \rho F \dd x \dd t=\int_\Omega \rho(T) F-\rho_0 F \dd x = \int_{\mf(T)} \rho(T)F \dd x - \int_{\mf(0)} \rho_0 F \dd x,
\end{align*}
where in the last equality we used again that $\rho|_\ms=\rho_s$ is constant. Thus, we estimate the force term as
\begin{align*}
     \bigg| \int_0^T\int_\Omega \rho\bu\cdot\mathbf f \dd x \dd t \bigg| &\leq 2 \sup_{t\in (0,T)} \|\rho F\|_{L^1(\mf(t))}\leq 2\|\rho\|_{L^\infty(0,T;L^\gamma(\mf(\cdot))}\|F\|_{L^\frac{\gamma}{\gamma-1}(\Omega)}\\
     &\leq \frac{1}{2(\gamma-1)}\|\rho\|_{L^\infty(0,T;L^\gamma(\mf(\cdot)))}^\gamma + C(\gamma)\|F\|_{L^\frac{\gamma}{\gamma-1}(\Omega)}^\frac{\gamma}{\gamma-1}.
\end{align*}
The last line is coming from the following form of Young's inequality: 
\begin{align*}
    ab\leq \varepsilon a^p + \frac{(p\varepsilon)^{1-q}}{q}b^q \quad \forall \ (a,b,\varepsilon)\in (0,\infty)^3, \quad \forall\ p,q\in (1,\infty),\ \frac1p+\frac1q=1.
\end{align*}
Hence, we can absorb the first term on the right-hand site by the left-hand site of \eqref{eq2-1}. Further, the definitions of $\rho$ and $\bu$ in \eqref{extended}, together with $\mathbf q_0=\rho(0)\bu(0)$, yield
\begin{align*}
    \int_{\ms(0)} \frac{|\mathbf q_0|^2}{2\rho_0} \dd x &=\int_{\ms(0)} \frac12 \rho_s |\mathbf V_0 
    |^2 \dd x
    =\frac12 \rho_s\int_{\ms(0)} |\mathbf V_0|^2 
    \dd x
    =\frac12 m |\mathbf V_0|^2. 
\end{align*}
Together with \eqref{eq2-1}, 
we conclude the desired estimate \eqref{eq2}.
\end{proof}

\begin{Remark}
In the case of gravitational force, we have $\mathbf f=-g\mathbf e_3=-g\nabla [x\mapsto x_3]$, where $g>0$ is the acceleration due to gravity. Thus, $F=-gx_3$ and the energy inequality \eqref{eq2} becomes
\begin{align}\label{g:energy}
\begin{split}
&\sup_{t\in (0,T)} \int_{\mf(t)} \bigg(\frac{1}{2}{\rho}|\bu|^2+\frac{\rho^\gamma}{2(\gamma-1)}\bigg) \dd x + \sup_{t\in (0,T)} \int_{\ms(t)} \frac12 \rho |\bu|^2 \dd x \\
&\quad + \int_0^T\int_{\mf(t)} \big( 2\mu|\bD(\bu)|^2+\lambda|\div\bu|^2 \big) \dd x \dd t\\
&\leq \int_{\mf(0)}\bigg(\frac{1}{2}\frac{|\mathbf q_0|^2}{{\rho}_0} + \frac{\rho_0^\gamma}{\gamma-1}\bigg) \dd x+\frac{m}{2} |\mathbf V_0|^2 
+ C(\gamma)\|g x_3\|_{L^\frac{\gamma}{\gamma-1}(\Omega)}^\frac{\gamma}{\gamma-1}\\
&\leq\int_{\mf(0)}\bigg(\frac{1}{2}\frac{|\mathbf q_0|^2}{{\rho}_0} + \frac{\rho_0^\gamma}{\gamma-1}\bigg) \dd x+\frac{m}{2} |\mathbf V_0|^2 
+ C(\gamma) g^\frac{\gamma}{\gamma-1} (\mathrm{diam} \, \Omega)^{\frac{\gamma}{\gamma-1}+3},
\end{split}
%
\end{align}
where we used that $x_3\leq \mathrm{diam} \, \Omega$ and $|\Omega|\leq (\mathrm{diam} \, \Omega)^3$.

If we define the initial energy by
\begin{align}\label{eq:E0}
E_0 = \int_{\mf(0)}\bigg(\frac{1}{2}\frac{|\mathbf q_0|^2}{\rho_0} +  \frac{\rho_0^\gamma}{\gamma-1}\bigg) \dd x + \frac{m}{2}|\mathbf V_0|^2, 
\end{align}
we obtain from \eqref{g:energy} the following energy estimate:
\begin{align}\label{re:energy1}
\begin{split}
&\sup_{t\in (0,T)}\int_{\mf(t)}\bigg(\frac{1}{2}{\rho}|\bu|^2+\frac{\rho^\gamma}{2(\gamma-1)}\bigg)\dd x + \sup_{t\in (0,T)}\int_{\ms(t)} \frac{1}{2}{\rho}|\bu|^2 \dd x \\
& \quad +\int_0^T\int_{\Omega} \big( 2\mu|\bD(\bu)|^2+\lambda|\div\bu|^2 \big)\dd x \dd t\\
&\leq E_0+L(g,\gamma,\Omega),
\end{split}
\end{align}
where
\begin{align}\label{eq:L}
    L(g,\gamma,\Omega)=C(\gamma) g^\frac{\gamma}{\gamma-1} (\mathrm{diam} \, \Omega)^{\frac{\gamma}{\gamma-1}+3}.
\end{align}
In particular, this together with Korn's and Poincar\'e's inequality yields
\begin{align}\label{re:energy}
    \|\rho |\bu|^2\|_{L^\infty(0,T;L^1(\Omega))} + \|\rho\|_{L^\infty(0,T;L^\gamma(\mf(\cdot))}^\gamma + \|\bu\|_{L^2(0,T;W_0^{1,2}(\Omega))}^2 \leq C(\Omega,\mu,\gamma)(E_0+L).
\end{align}
\end{Remark}



\section{Construction of the test function}\label{sec3}
Assume that  $(\rho, \bu, {\mathbf G})$ is a weak solution of \eqref{eq1} satisfying the assumptions \eqref{a1}--\eqref{a6} in the time interval $(0,T_*)$ before collision. From now on we denote $\ms_h=\ms_h(t)=\ms(0)+(h(t)-h(0)){\mathbf e}_3$ and $\mf_{h}=\mf_h(t)=\Omega\setminus \overline{\ms_h(t)}$.

Collision can occur if and only if $\lim_{t\to T_*}h(t)=0$. Note further that $\mbox{\rm dist}(\ms_h(t),\partial \Omega)= \min\{h(t),d_0\}$ by assumptions \eqref{a2} and \eqref{a6}.



\subsection{Test function}
In this section, we will use the notation $a\lesssim b$ whenever there is a constant $C>0$ which is independent of $a$, $b$, $h$, and $T$ such that $a\leq C\, b$. Further, we will make use of cylindrical coordinates $(r,\theta,x_3)$ with the standard basis $(\mathbf e_r, \mathbf e_\theta, \mathbf e_3)$.
As in the papers \cite{VaretHillairet2010, VaretHill2012, VHW2015}, we construct a test function ${\mathbf w}_h$ associated with the solid particle $\ms_h$ frozen at distance $h$. This function will be defined for $h\in (0,h_M)$ with $h_M=\sup_{t\in [0,T_*)}h(t)$. Note that when $h\to 0$, a cusp arises in $\mf_h$, which is contained in
\begin{align}\label{omegah}
    \Omega_{h,r_0}=\{ x \in \mf_h: 0\leq r<r_0,\ 0\leq  x_3\leq  h+r^{1+\alpha},\ r^2=x_1^2+x_2^2\}.
\end{align}
For the sequel, we fix $h$ as a (small enough) positive constant and define $\psi(r):= h+r^{1+\alpha}$. Note that the common boundary $\del \Omega_{h,r_0}\cap \del \ms_h$ is precisely given by the set $\{0\leq r\leq r_0,\ x_3=\psi(r)\}$.

We use a similar function as in \cite{VaretHillairet2010}: define smooth functions $\chi, \eta$ satisfying
\begin{align}
&\chi=1 \text{ on } (-r_0,r_0)^2 \times (0,r_0), && \chi=0 \text{ on } \Omega\setminus \big( (-2r_0, 2r_0)^2 \times (0, 2r_0) \big)\label{chi}\\
&\eta=1 \text{ on } \mathcal{N}_{d_0/2}, && \eta=0 \text{ on } \Omega \setminus \mathcal{N}_{d_0}, 
\end{align}
where $d_0>0$ is as in \eqref{a6}, and $\mathcal{N}_\delta$ is a $\delta$-neighborhood of $\ms(0)$. Let us further set $\Phi(t)=t^2(3-2t)$,
\begin{align}\label{phih}
\phi_h(r,x_3)= \begin{cases}
    r/2 & \text{on } \ms_h,\\
    \frac{r}{2} (1-\chi(r,x_3))\eta(r,x_3-h+h(0)) + \chi(r,x_3)\frac{r}{2}\Phi\left(\frac{x_3}{\psi(r)}\right) & \text{on } \Omega \setminus \ms_h,
\end{cases}
\end{align}
and $\mathbf w_h = \nabla \times (\phi_h \mathbf e_\theta)$.
Note that we can write $\mathbf w_h$ as
\begin{align}\label{def:wh}
\mathbf w_h=-\del_3\phi_h \mathbf e_r+\frac1r \del_r(r\phi_h)\mathbf e_3.
\end{align}
Observe that the function $\mathbf w_h$ satisfies
\begin{align*}
\mathbf w_h|_{\partial \ms_h}=\mathbf e_3,\quad \mathbf w_h|_{\del \Omega}=0,\quad \div\mathbf w_h=0.
\end{align*}
Indeed, the divergence-free condition is obvious from the definition of $\mathbf w_h$. Further, since $\phi_h = r/2$ on $\ms_h$, we have $\mathbf w_h = \mathbf e_3$ there.
Moreover, by definition of $\eta$ and $\chi$, we have $\phi_h=0$ on $\del \Omega \setminus \big( (-2 r_0, 2 r_0) \times \{ 0 \} \big)$ as long as $r_0$ and $h$ are so small that $h+r_0^{1+\alpha} \leq d_0 < r_0$. Lastly, in the annulus $\big( (-2r_0, 2r_0)^2 \setminus (-r_0, r_0) \big) \times \{ 0 \}$, we use $\Phi(0)=0$ and $\eta(r,h(0))=0$ for $r>\mathfrak{r}_0$ for some $\mathfrak{r}_0\in (d_0, r_0)$ to finally conclude $\mathbf w_h|_{\del\Omega}=0$, provided $h$ is sufficiently close to zero.

We summarize further properties of $\mathbf w_h$ in the following Lemma:
\begin{Lemma}\label{BdsWh}
$\mathbf w_h\in C_c^\infty(\Omega)$ and
\begin{align}\label{est1:wh}
\|\del_h\mathbf w_h\|_{L^\infty(\Omega\setminus \Omega_{h,r_0})} + \|\mathbf w_h\|_{W^{1,\infty}(\Omega\setminus \Omega_{h,r_0})}\lesssim 1.
\end{align}
Moreover,
\begin{align*}
\|\mathbf w_h\|_{L^p(\Omega_{h,r_0})} &\lesssim 1 \text{ for any } p<1+\frac3\alpha,\\
\|\del_h \mathbf w_h\|_{L^p(\Omega_{h,r_0})} + \|\nabla \mathbf w_h\|_{L^p(\Omega_{h,r_0})} &\lesssim 1 \text{ for any } p<\frac{3+\alpha}{1+2\alpha}.
\end{align*}
\end{Lemma}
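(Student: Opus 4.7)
The plan is to compute $\mathbf{w}_h$ explicitly in cylindrical coordinates inside the cusp $\Omega_{h,r_0}$, extract pointwise bounds in terms of $r$ and $\psi(r) = h + r^{1+\alpha}$, and integrate; on the complement, standard smoothness of the cutoffs $\chi,\eta$ will suffice. Compact support of $\mathbf{w}_h$ in $\Omega$ follows from that of $\eta$ and of the $\chi$-contribution, while smoothness is clear piecewise, with matching of $\phi_h$ and its first derivatives across $\partial \ms_h \cap \Omega_{h,r_0}$ guaranteed by $\Phi(1)=1$, $\Phi'(1)=0$; the claimed $C^\infty_c$ regularity would require replacing $\Phi$ by a smoother interpolant with vanishing higher-order derivatives at the endpoints, which does not affect the estimates below.

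On $\Omega \setminus \Omega_{h,r_0}$: inside $\ms_h$ we have $\phi_h = r/2$ (so $\mathbf{w}_h = \mathbf{e}_3$ and $\partial_h \mathbf{w}_h = 0$), and in the remaining fluid part either $r \geq r_0$ (which forces $\psi(r) \geq r_0^{1+\alpha}$) or we lie inside the smooth, $h$-independent $(1-\chi)\eta$-contribution; in either case $\psi$ is uniformly bounded away from zero and $\partial_h$ acts only through $\psi$, yielding \eqref{est1:wh}. Inside the cusp, $\chi \equiv 1$ (using $r_0 < 1$), the $\eta$-term drops out, and $\phi_h = (r/2)\Phi(t)$ with $t = x_3/\psi(r) \in [0,1]$. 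With $\partial_3 t = 1/\psi$, $\partial_r t = -t\psi'/\psi$, $\partial_h t = -t/\psi$, and the elementary identities
\begin{align*}
\frac{r\psi'(r)}{\psi(r)} = \frac{(1+\alpha)r^{1+\alpha}}{h+r^{1+\alpha}} \leq 1+\alpha, \qquad r^\alpha \leq r_0^\alpha \lesssim 1,
\end{align*}
a direct differentiation of \eqref{def:wh} produces the pointwise bounds
\begin{align*}
|\mathbf{w}_h| \lesssim \frac{r}{\psi(r)} + 1, \qquad |\nabla \mathbf{w}_h| + |\partial_h \mathbf{w}_h| \lesssim \frac{r}{\psi(r)^2} + \frac{1}{\psi(r)}.
\end{align*}

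Fubini in cylindrical coordinates together with $\psi(r) \geq r^{1+\alpha}$ (the case $p \leq 1$ being trivial since $\psi$ is bounded above) then gives
\begin{align*}
\int_{\Omega_{h,r_0}} |\mathbf{w}_h|^p \dd x &\lesssim \int_0^{r_0} \frac{r^{p+1}}{\psi(r)^{p-1}} \dd r + 1 \leq \int_0^{r_0} r^{2-\alpha(p-1)} \dd r + 1,\\
\int_{\Omega_{h,r_0}} \bigl(|\nabla \mathbf{w}_h|^p + |\partial_h \mathbf{w}_h|^p\bigr) \dd x &\lesssim \int_0^{r_0} \frac{r^{p+1}}{\psi(r)^{2p-1}} \dd r + 1 \leq \int_0^{r_0} r^{2+\alpha-p(1+2\alpha)} \dd r + 1,
\end{align*}
both finite uniformly in $h$ precisely under the respective thresholds $p < 1+3/\alpha$ and $p < (3+\alpha)/(1+2\alpha)$. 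The main subtlety is the chain-rule bookkeeping inside the cusp: several intermediate terms carry prefactors such as $r^\alpha/\psi$ or $\psi'^2/\psi^2$ that are only bounded by $1/r$ via $\psi \geq r^{1+\alpha}$, and a naive estimate would yield a spurious $1/r$ contribution; obtaining the clean scaling $r/\psi^2 + 1/\psi$ relies instead on the bound $r^\alpha \leq r_0^\alpha \lesssim 1$ to absorb $r^\alpha$ into a universal constant, which is crucial for the range $\alpha < 1/3$ relevant to the main theorem.
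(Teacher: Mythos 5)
Your proof is correct and follows essentially the same route as the paper: an explicit computation of $\mathbf w_h$ in cylindrical coordinates inside the cusp, pointwise bounds using $x_3\leq\psi$ and $\partial_r\psi\sim r^\alpha\lesssim 1$ (you absorb the $r^\alpha$ factors slightly earlier, arriving at the same binding terms $r/\psi^2$ and $1/\psi$), and then integration via $\psi(r)\geq r^{1+\alpha}$, which is exactly the content of the paper's estimate \eqref{Lem:est}. Your side remark that $\Phi(t)=t^2(3-2t)$ only gives $C^{1,1}$ matching across $\partial\ms_h$ (so the literal claim $\mathbf w_h\in C_c^\infty(\Omega)$ needs a smoother profile or a density argument) is a fair observation and, as you note, does not affect any of the stated bounds.
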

\begin{proof}
We know from the definition of $\mathbf w_h$ in \eqref{def:wh} that $\mathbf w_h\in C_c^\infty(\Omega)$. Moreover, $\mathbf w_h$ is bounded outside a bounded region, so the first inequality \eqref{est1:wh} is obvious.

Due to the property \eqref{chi} of $\chi$, the function $\phi_h$ (see \eqref{phih}) in $\Omega_{h,r_0}$ (see \eqref{omegah}) becomes
\begin{equation*}
\phi_h(r,x_3)=\frac{r}{2}\Phi\bigg(\frac{x_3}{\psi(r)}\bigg)\quad \mbox{in}\quad \Omega_{h,r_0}.
\end{equation*}
By definition \eqref{def:wh} of $\mathbf w_h$, we have
\begin{align*}
\mathbf w_h = -\frac{r}{2}\Phi' \bigg(\frac{x_3}{\psi}\bigg)\frac1\psi \mathbf e_r + \Phi\bigg(\frac{x_3}{\psi}\bigg)\mathbf e_3 - \frac{r}{2}\Phi' \bigg(\frac{x_3}{\psi}\bigg)\frac{x_3 \del_r\psi}{\psi^2}\mathbf e_3 \quad \mbox{in}\quad \Omega_{h,r_0}.
\end{align*}
Further, $x_3\leq \psi$ in $\Omega_{h,r_0}$. Hence,
\begin{align}\label{bdsPhi}
    |\Phi| + |\Phi'| + |\Phi''|\lesssim 1,
\end{align}
leading to
\begin{align*}
|\mathbf w_h|\lesssim 1+\frac{r}{\psi}(1+\del_r \psi).
\end{align*}
Due to legibility, we will not write the argument of $\Phi$ in the sequel. Similarly, we obtain
\begin{align*}
|\del_r \mathbf w_h| &\lesssim \Phi' \frac1\psi + \frac{r}{2\psi}\Phi''\frac{x_3\del_r \psi}{\psi^2} + \frac{r}{2}\Phi'\frac{\del_r\psi}{\psi^2}\\
&\quad + \Phi'\frac{x_3\del_r\psi}{\psi^2} + \frac{r}{2}\Phi''\cdot \bigg(\frac{x_3\del_r\psi}{\psi^2}\bigg)^2+\frac{r}{2}\Phi'\frac{x_3\del_r^2\psi}{\psi^2} + r\Phi'\frac{x_3(\del_r\psi)^2}{\psi^3},\\
|\del_3\mathbf w_h| &\lesssim \frac{r}{2}\Phi'' \frac{1}{\psi^2} + \Phi'\frac1\psi + \frac{r}{2}\Phi''\frac{x_3\del_r\psi}{\psi^3}+\frac{r}{2}\Phi'\frac{\del_r\psi}{\psi^2},\\
|\del_h \mathbf w_h| &\lesssim \frac{r}{2}\Phi''\frac{x_3}{\psi^3}+\frac{r}{2}\Phi'\frac{1}{\psi^2}+\Phi'\frac{x_3}{\psi^2}+\frac{r \del_r\psi}{2}\Phi''\cdot \bigg(\frac{x_3}{\psi^2}\bigg)^2+\Phi'\frac{x_3 r\del_r\psi}{\psi^3}.
\end{align*}
Using again $x_3\leq \psi$ and the bounds \eqref{bdsPhi}, we have
\begin{align*}
    |\nabla\mathbf w_h| &\lesssim |\del_r\mathbf w_h|+|\del_3\mathbf w_h|+\bigg|\frac{\mathbf w_h\cdot\mathbf e_r}{r}\bigg| \lesssim \frac1\psi + \frac{r}{\psi^2} +\frac{r\del_r\psi}{\psi^2} + \frac{\del_r\psi}{\psi} + \frac{r(\del_r\psi)^2}{\psi^2} + \frac{r\del_r^2\psi}{\psi},\\
    |\del_h \mathbf w_h| &\lesssim \frac1\psi+\frac{r}{\psi^2}+\frac{r\del_r\psi}{\psi^2}.
\end{align*}
Note that these bounds hold independently of the specific form of $\psi$. In our setting, $\psi(r)=h+r^{1+\alpha}$. Thus, the proof of the remaining estimates on $\mathbf w_h$, $\nabla\mathbf w_h$ and $\del_h\mathbf w_h$ are based on the following result, which can be proven analogously to \cite[Lemma 13]{Hillairet2007}:
we have
\begin{align}\label{Lem:est}
    \int_0^{r_0} \frac{r^p}{(h+r^{1+\alpha})^q} \dd r\lesssim 1 \quad \forall \ (\alpha,p,q)\in (0,\infty)^3 \quad \mbox{  satisfying  }\quad p+1>q(1+\alpha).
\end{align}
%

Using the estimate \eqref{Lem:est}, we get
\begin{align*}
&\int_{\Omega_{h,r_0}} |\mathbf w_h|^p \dd x \lesssim 1+ \int_0^{r_0} \int_0^\psi \frac{r^{p+1}}{\psi^p}+\frac{r^{(1+\alpha)p+1}}{\psi^p} \dd x_3 \dd r \\
&\lesssim 1+ \int_0^{r_0} \frac{r^{p+1}}{\psi^{p-1}}+\frac{r^{(1+\alpha)p+1}}{\psi^{p-1}} \dd r \lesssim 1\\
\Leftrightarrow &\ p+2>(p-1)(1+\alpha) \ \mbox{ and } \ p(1+\alpha)+2>(p-1)(1+\alpha)\\
\Leftrightarrow &\ \alpha (p-1)<3.
\end{align*}
Using the estimates $r\del_r\psi\lesssim \psi$ and $r\del_r^2\psi\lesssim \del_r\psi$, we have
\begin{align*}
|\nabla \mathbf w_h| \lesssim |\del_r\mathbf w_h|+|\del_3\mathbf w_h|+\Big|\frac{\mathbf w_h \cdot \mathbf e_r}{r}\Big|\lesssim \frac1\psi+\frac{r}{\psi^2}+\frac{\del_r \psi}{\psi}, \qquad |\del_h \mathbf w_h| \lesssim \frac1\psi+\frac{r}{\psi^2}.
\end{align*}
In particular, it is enough to estimate $\nabla\mathbf w_h$, since the most restrictive term is $r/\psi^2$. Hence, we obtain
\begin{align*}
\int_{\Omega_{h,r_0}} |\nabla\mathbf w_h|^p \dd x &\lesssim \int_0^{r_0} \int_0^\psi \frac{r}{\psi^p} + \frac{r^{p+1}}{\psi^{2p}} + \frac{r (\del_r\psi)^p}{\psi^p} \dd x_3 \dd r \lesssim \int_0^{r_0} \frac{r}{\psi^{p-1}} + \frac{r^{p+1}}{\psi^{2p-1}} + \frac{r^{\alpha p+1}}{\psi^{p-1}} \dd r \lesssim 1\\
\Leftrightarrow &\ 2>(p-1)(1+\alpha) \ \mbox{ and } \ p+2>(2p-1)(1+\alpha) \ \mbox{ and } \ \alpha p+2>(p-1)(1+\alpha)\\
\Leftrightarrow &\ p<\frac{3+\alpha}{1+2\alpha}.
\end{align*}
\end{proof}
\begin{Remark}
Let us remark that the specific form of the function $\Phi$ is due to the following observation (see \cite[Section 3.1]{VaretHill2012}): searching for a minimizer of the energy functional $\int_{\mf_h} |\nabla \bu|^2 \dd x$ in the class
\begin{align*}
    \{\bu \in W_{\mathrm{loc}}^{1,2}(\mf_h): \bu=\nabla\times (\phi\mathbf e_\theta)=-\del_3\phi \mathbf e_r+\frac1r \del_r(r\phi)\mathbf e_3\},
\end{align*}
where $\phi$ satisfies the boundary conditions
\begin{align*}
    \del_3\phi |_{\del\ms_h}=\del_3\phi |_{\del\Omega}=0,\quad \del_r(r\phi) |_{\del\ms_h}=r, \quad \phi |_{\del\Omega}=0,
\end{align*}
and anticipating that most of the energy comes from the $x_3$-derivative as $h\to 0$, one ends up with a relaxed problem of searching a minimizer to
\begin{align*}
    \mathcal{E}_h=\int_{\{0<r<r_0,\ 0<x_3<\psi(r)\}} |\del_3 \bu_r|^2 \dd x = \int_{\{0<r<r_0,\ 0<x_3<\psi(r)\}} |\del_3^2 \phi|^2 \dd x
\end{align*}
in the class
\begin{align*}
    \{\bu \in W^{1,2}(\{0<r<r_0,\ 0<x_3<\psi(r)\}): \bu=-\del_3\phi \mathbf e_r+\frac1r \del_r(r\phi)\mathbf e_3\},
\end{align*}
supplemented with boundary conditions
\begin{align*}
    \del_3 \phi(r,\psi(r))=0, \quad \del_3 \phi(r,0)=0, \quad \phi(r,\psi(r))=\frac{r}{2}, \quad \phi(r,0)=0.
\end{align*}
According to the Euler--Lagrange equation $\del_3^4 \phi=0$, one easily finds that the unique minimizer of $\mathcal E_h$ is given by
\begin{align*}
    \phi_{\min}(r,x_3) = \frac{r}{2}\Phi\bigg(\frac{x_3}{\psi(r)}\bigg),\quad \Phi(t)=t^2(3-2t),\quad t\in [0,1].
\end{align*}
\end{Remark}


\subsection{Estimates near the collision -- Proof of Theorem \ref{theo1}}\label{sec:PfThm}

 In order to prove Theorem~\ref{theo1}, let $T_* \in (0, \infty]$ be the maximal existence time of the solution $(\rho, \bu)$. We will show that for any $T < T_*$, we have a uniform upper bound $T < \overline{T}(\gamma, \alpha, \mu, g, \mathbf w_h, \Omega, m, E_0, L)$, showing that the solution $(\rho, \bu)$ has a finite time interval of existence (that is, $T_* < \infty$) and hence collision occurs.

\begin{proof}[Proof of Theorem~\ref{theo1}]
Let $0<T<T_*$ and let $\zeta\in C^1_c([0,T))$ with $0\leq \zeta\leq 1$, $\zeta'\leq 0$, and $\zeta=1$ near $t=0$. (For instance, the properly extended function $\zeta_k(t)=\zeta(kt-(k-1)T)$ for some $k\geq 1$ and $\zeta(t)=\exp[T^{-2}-(T^2-t^2)^{-1}]$ will do.) We take $\zeta(t){\mathbf w}_{h(t)}$ as test function in the weak formulation of the momentum equation \eqref{momentum_weak} with the source term ${\mathbf f}=-g{\mathbf e}_3$, $g>0$. Recalling $\div\mathbf w_h=0$ and $\del_t \mathbf w_{h(t)}=\dot{h}(t) \del_h\mathbf w_{h(t)}$, we have the identity
\begin{align} \label{e1}
\begin{split}
&\int_0^T\zeta\int_\Omega \rho \bu\otimes \bu:\bD({\mathbf w}_h) \dd x \dd t + \int^T_0\zeta'\int_\Omega \rho \bu\cdot {\mathbf w}_h \dd x \dd t\\
&\quad + \int^T_0\zeta \dot{h}\int_\Omega \rho \bu\cdot {\del_h \mathbf w}_h \dd x \dd t - \int^T_0\zeta\int_\Omega {\bS}(\bu) :\bD({ \mathbf w}_h) \dd x \dd t\\
= &\int_0^T\zeta \int_\Omega \rho g \mathbf e_3\cdot\mathbf w_h \dd x \dd t - \int_\Omega \mathbf q_0\cdot \mathbf w_h \dd x\\
= &\int_0^T\zeta \int_{\ms_h} \rho g \mathbf e_3\cdot\mathbf w_h \dd x \dd t + \int_0^T\zeta \int_{\mf_h} \rho g \mathbf e_3\cdot\mathbf w_h \dd x \dd t - \int_\Omega \mathbf q_0\cdot \mathbf w_h \dd x.
\end{split}
\end{align}

Observe that we have $\mathbf w_h=\mathbf e_3$ on $\ms_h$, so for a sequence $\zeta_k\to 1$ in $L^1([0,T))$,
\begin{align*}
    &\int^T_0\zeta_k \int_{\ms_h} \rho g{\mathbf e}_3\cdot {\mathbf w}_h \dd x \dd t = \int_0^T\zeta_k \int_{\ms_h} \rho_s g \to mgT.
\end{align*}

In particular, for a proper choice of $\zeta$, it follows that
\begin{multline}\label{mom}
\frac12 mgT \leq \int_0^T\zeta\int_\Omega \rho \bu\otimes \bu:\bD({\mathbf w}_h) \dd x \dd t + \int^T_0\zeta'\int_\Omega \rho \bu\cdot {\mathbf w}_h \dd x \dd t + \int^T_0\zeta \dot{h}\int_\Omega \rho \bu\cdot {\del_h \mathbf w}_h \dd x \dd t\\
\quad - \int^T_0\zeta\int_\Omega {\bS}(\bu) :\bD({ \mathbf w}_h) \dd x \dd t
- \int_0^T\zeta \int_{\mf_h} \rho g \mathbf e_3\cdot\mathbf w_h \dd x \dd t + \int_\Omega \mathbf q_0\cdot \mathbf w_h \dd x
 = \sum_{j=1}^6 I_j.
\end{multline}

We will estimate each $I_j$ separately, and set our focus on the explicit dependence on $T$ and $m$. For the latter purpose, we split each density dependent integral into its fluid and solid part $I_j^f$ and $I_j^s$, respectively. To get a lean notation, we will use the symbol $a\lesssim b$ whenever there is a constant $C>0$ such that $a\leq C\, b$, where $C$ does not depend on $a$, $b$, $E_0$, $L$, and $T$.\\

$\bullet$ For $I_2^f$, we have by $\zeta'\leq 0$, $\zeta(T)=0$, and $\zeta(0)= 1$
\begin{align*}
    |I_2^f| &\leq -\int_0^T\zeta'\int_{\mf_h} \rho |\bu| |\mathbf w_h| \dd x \dd t = -\int_0^T\zeta'\int_{\mf_h} \sqrt{\rho} \sqrt{\rho} |\bu| |\mathbf w_h| \dd x \dd t\\
    &\leq -\int_0^T\zeta' \|\sqrt{\rho}\|_{L^{2\gamma}({\mf_h})} \|\sqrt{\rho}\bu\|_{L^2({\mf_h})} \|\mathbf w_h\|_{L^\frac{2\gamma}{\gamma-1}({\mf_h})} \dd t\\
    &\leq \|\rho\|_{L^\infty(0,T;L^\gamma({\mf(\cdot)}))}^\frac12 \|\rho |\bu|^2\|_{L^\infty(0,T;L^1(\Omega))}^\frac12 \|\mathbf w_h\|_{L^\infty(0,T;L^\frac{2\gamma}{\gamma-1}(\mf(\cdot)))} \zeta(0)\lesssim (E_0+L)^{\frac{1}{2\gamma}+\frac12},
\end{align*}
where we have used the estimate \eqref{re:energy} and Lemma \ref{BdsWh} under the condition 
\begin{align*}
    \frac{2\gamma}{\gamma-1}<1+\frac3\alpha \Leftrightarrow \alpha < \frac{3\gamma-3}{\gamma+1}.
\end{align*}

$\bullet$ For $I_2^s$, notice that $\mathbf w_h|_{\ms_h}=\mathbf e_3$, $\rho|_{\ms_h}=\rho_s$, and $\bu|_{\ms_h}=\dot{h}\mathbf e_3$. 
Further, as before,
\begin{align*}
    \int_{S_h} \frac12 \rho |\bu|^2 \dd x = \int_{S_h} \frac12 \rho_s |\dot{h}\mathbf e_3|^2 
    \dd x = \frac12 m |\dot{h}|^2. 
\end{align*}
Hence, we infer from the energy inequality \eqref{g:energy} that
\begin{align}\label{est:h}
    \sup_{t\in (0,T)} |\dot{h}| = (\sup_{t\in (0,T)} |\dot{h}|^2)^\frac12 \leq \sqrt{\frac2m}(E_0+L)^\frac12.
\end{align}
Thus,
\begin{align*}
    |I_2^s| = \bigg|\int_0^T \zeta' \int_{\ms_h} \rho_s \mathbf e_3\cdot \dot{h} \mathbf e_3 
    \dd x \dd t\bigg| = \bigg|\int_0^T \zeta' \dot{h} m \dd t\bigg|\leq m \sup_{t\in (0,T)} |\dot{h}| \lesssim \sqrt{m}(E_0 + L)^\frac12.
\end{align*}

$\bullet$ For $I_3$, observe that $I_3^s=0$ due to $\del_h \mathbf w_h|_{\ms_h} = \del_h \mathbf e_3=0$. Next, by Sobolev embedding and \eqref{re:energy},
\begin{align*}
    \|\bu\|_{L^2(0,T;L^6(\Omega))} \lesssim \|\bu\|_{L^2(0,T;W_0^{1,2}(\Omega))} \lesssim (E_0+L)^\frac12.
\end{align*}
Thus,
\begin{align*}
    |I_3|&=|I_3^f| \leq \int_0^T\zeta |\dot{h}(t)|\, \|\rho\|_{L^\infty(0,T;L^\gamma(\mf(\cdot)))} \|\bu\|_{L^2(0,T;L^6(\Omega))} \|\del_h \mathbf w_h\|_{L^\frac{6\gamma}{5\gamma-6}(\mf(\cdot))} \dd t\\
    &\lesssim (E_0+L)^{\frac{1}{\gamma}+\frac12} \|\dot{h}\|_{L^\infty(0,T)} \|\zeta\|_{L^1(0,T)}\lesssim \sqrt{\frac1m} (E_0+L)^{\frac{1}{\gamma}+1} T,
\end{align*}
where we have used the estimates \eqref{re:energy}, \eqref{est:h}, and Lemma \ref{BdsWh} under the condition
\begin{align*}
    \frac{6\gamma}{5\gamma-6} < \frac{3+\alpha}{1+2\alpha} \Leftrightarrow \alpha < \frac{9(\gamma-2)}{7\gamma+6}.
\end{align*}

$\bullet$ Regarding $I_4$, by using the fact that $\div\mathbf w_h=0$, we have
\begin{align*}
    \bS(\bu):\bD(\mathbf w_h)&=2\mu \bD(\bu):\bD(\mathbf w_h)+\lambda \div\bu \ \mathbb I:\bD(\mathbf w_h)=2\mu \bD(\bu):\bD(\mathbf w_h)+\lambda \div\bu \div\mathbf w_h\\
    &=2\mu \bD(\bu):\bD(\mathbf w_h).
\end{align*}
Hence, using the bounds on $\bD(\bu)$ already obtained in \eqref{re:energy1}, we calculate
\begin{align*}
    |I_4| &\lesssim \int_0^T \zeta \|\bD(\bu)\|_{L^2(\Omega)} \|\nabla \mathbf w_h\|_{L^2(\Omega)} \dd t \leq \|\zeta\|_{L^2(0,T)} \|\bD(\bu)\|_{L^2((0,T)\times \Omega)} \|\nabla\mathbf w_h\|_{L^\infty(0,T;L^2(\Omega))}\\
    &\lesssim (E_0+L)^\frac12 T^\frac12,
\end{align*}
where we have used Lemma \ref{BdsWh} under the condition
\begin{align*}
    2<\frac{3+\alpha}{1+2\alpha} \Leftrightarrow \alpha< \frac13.
\end{align*}

$\bullet$ For $I_5=I_5^f$,
\begin{align*}
    |I_5| &\leq g \int_0^T\zeta \|\rho\|_{L^\gamma(\mf_h)}\|\mathbf w_h\|_{L^\frac{\gamma}{\gamma-1}(\Omega)} \leq g \|\zeta\|_{L^1(0,T)} \|\rho\|_{L^\infty(0,T;L^\gamma(\mf(\cdot)))} \|\mathbf w_h\|_{L^\infty(0,T;L^\frac{\gamma}{\gamma-1}(\Omega))}\\
    &\leq g (E_0+L)^\frac1\gamma T,
\end{align*}
by using Lemma \ref{BdsWh} under the condition
\begin{align*}
    \frac{\gamma}{\gamma-1}<1+\frac3\alpha \Leftrightarrow \alpha< 3-\frac3\gamma.
\end{align*}

$\bullet$ Similar to $I_2^f$, we have for $I_6^f$ the estimate
\begin{align*}
    |I_6^f| \leq \|\mathbf q_0\|_{L^\frac{2\gamma}{\gamma+1}(\mf(0))}\|\mathbf w_h\|_{L^\infty(0,T;L^\frac{2\gamma}{\gamma-1}(\Omega))} \lesssim \bigg\|\frac{|\mathbf q_0|^2}{\rho_0}\bigg\|_{L^1(\mf(0))}^\frac12 \|\rho_0\|_{L^\gamma(\mf(0))}^\frac12 \lesssim (E_0+L)^{\frac12+\frac{1}{2\gamma}}.
\end{align*}

$\bullet$ For $I_6^s$, where $\mathbf w_h=\mathbf e_3$ and $\mathbf q_0=\rho(0)\bu(0)=\rho_s \dot{h} \mathbf e_3$, 
we have similarly to $I_2^s$ that
\begin{align*}
    |I_6^s| = \bigg|\int_{\ms(0)} \mathbf q_0\cdot \mathbf e_3 \dd x\bigg| = \bigg|\int_{\ms(0)} \rho_s \dot{h} \dd x\bigg| \leq m \|\dot{h}\|_{L^\infty(0,T)} \lesssim \sqrt{m}(E_0+L)^\frac12.
\end{align*}

$\bullet$ Let us turn to $I_1$. Due to $\mathbf w_h|_{\ms_h}=\mathbf e_3$, we see that $I_1^s=0$ since $\bD(\mathbf w_h)=0$ there. Hence, we calculate
\begin{align*}
    |I_1|&= |I_1^f| \lesssim \int_0^T \zeta \|\rho\|_{L^\gamma(\mf_h))} \|\bu\|_{L^6(\Omega)}^2 \|\nabla\mathbf w_h\|_{L^\frac{3\gamma}{2\gamma-3}(\Omega)} \\
    &\lesssim \|\rho\|_{L^\infty(0,T;L^\gamma(\mf_h))} \|\nabla\mathbf w_h\|_{L^\infty(0,T;L^\frac{3\gamma}{2\gamma-3}(\Omega))} \int_0^T\zeta \|\nabla\bu \|_{L^2(\Omega)}^2\\
    &\lesssim (E_0+L)^\frac1\gamma \|\zeta \|_{L^\infty(0,T)}\|\nabla\bu\|_{L^2((0,T)\times\Omega)}^2 \lesssim (E_0+L)^{\frac1\gamma+1},
\end{align*}
by using the estimate \eqref{re:energy} and Lemma \ref{BdsWh} under the condition
\begin{align*}
    \frac{3\gamma}{2\gamma-3}<\frac{3+\alpha}{1+2\alpha} \Leftrightarrow \alpha<\frac{3(\gamma-3)}{4\gamma+3}.
\end{align*}

Note further that for any $\gamma\geq 3$,
\begin{align*}
    \frac{3(\gamma-3)}{4\gamma+3}\leq \min\bigg\{\frac{3\gamma-3}{\gamma+1}, \frac{9(\gamma-2)}{7\gamma+6}, 3-\frac{3}{\gamma}\bigg\},
\end{align*}
and that all estimates are independent of the choice of $\zeta$. Hence, we can take a sequence $\zeta_k\to 1$ in $L^\infty([0,T))$ without changing the bounds obtained. In turn, collecting all estimates above, we finally arise at
\begin{equation*}
    \frac12 mgT \leq C_0 (1+\sqrt{m}+\sqrt{m}^{-1}) \bigg((E_0+L)^{\frac12+\frac{1}{2\gamma}} + (E_0+L)^\frac12 + g(E_0+L)^\frac1\gamma + (E_0+L)^{1+\frac1\gamma} \bigg) (1 + T^\frac12 + T),
\end{equation*}
which after dividing by $\frac12 m$ and using Young's inequality on several terms, leads to
\begin{align}\label{infinalIneq}
    gT \leq C_0 (m^{-1}+m^{-\frac12}+m^{-\frac32}) \bigg(1 + (E_0+L)^{1+\frac{1}{\gamma}} + g(E_0+L)^\frac1\gamma \bigg) (1 + T),
\end{align}
where $C_0$ only depends on $\gamma, \alpha, \mu$, the bounds on $\mathbf w_h$ obtained in Lemma \ref{BdsWh}, and the Sobolev and Korn constant of $\Omega$, provided
\begin{align*}
    \gamma>3 \quad \text{and} \quad \alpha<\min \bigg\{ \frac13, \frac{3(\gamma-3)}{4\gamma+3} \bigg\}.
\end{align*}

Recalling the definitions of $E_0$ from \eqref{eq:E0} and $L$ from \eqref{eq:L} as
\begin{align*}
    E_0 &= \int_{\mf(0)}\bigg(\frac{1}{2}\frac{|\mathbf q_0|^2}{\rho_0} +  \frac{\rho_0^\gamma}{\gamma-1}\bigg) \dd x + \frac{m}{2}|\mathbf V_0|^2,\\ 
    L &= C(\gamma)g^\frac{\gamma}{\gamma-1}(\rm diam\,  \Omega)^{\frac{\gamma}{\gamma-1}+3},
\end{align*}

we see that collision can occur only if the solid's mass in \eqref{infinalIneq} is large enough, meaning in fact it's density is very high. Since $E_0$ depends on solid's mass, 
we require the solid initially to have low vertical speed. 
More precisely, choosing $\mathbf V_0$ 
such that $|\mathbf V_0|=\mathcal O(m^{-\frac12})$, and choosing $m$ high enough such that
\begin{align}\label{finalIneq}
    C_0 (m^{-1}+m^{-\frac12}+m^{-\frac32}) \bigg(1 + (E_0+L)^{1+\frac{1}{\gamma}} + g(E_0+L)^\frac1\gamma \bigg) < g,
\end{align}
the solid touches the boundary of $\Omega$ in finite time, ending the proof of Theorem \ref{theo1}.
\end{proof}
\subsection{Discussion on large $\alpha$}
To end this section, let us briefly explain what is the difficulty for larger values of $\alpha$. To this end, we recall how the argument for incompressible fluids in two dimensions works (see \cite[Section~4]{VaretHillairet2010}).\\

As can be seen from the above proof, one has to carefully estimate the term $\int_\mf \bS(\bu):\bD(\mathbf w_h) \dd x$. Indeed, one may find a pressure $q_h$ such that the couple $(\mathbf w_h, q_h)$ is a good approximation to the solution of the Stokes equations in $\mf$ (in a sense to be specified), and that for all $\bu \in W_0^{1,2}(\Omega)$ with $\div\bu=0$ and $\bu|_{\ms}=\dot{h}\mathbf e_3$
\begin{align*}
    \int_\mf |(\Delta\mathbf w_h-\nabla q_h)\cdot\bu |\dd x\lesssim \|\bu\|_{W_0^{1,2}(\Omega)}.
\end{align*}
For such solenoidal $\bu$, integration by parts then gives
\begin{align*}
    \int_\mf \bS(\bu):\bD(\mathbf w_h) \dd x &= -\int_\mf \bu \cdot \Delta \mathbf w_h \dd x + \int_{\del\mf} \bu \cdot \bD(\mathbf w_h) \cdot \mathbf n \dd S\\
    &= -\int_\mf \bu \cdot (\Delta \mathbf w_h - \nabla q_h) \dd x + \dot{h} \int_{\del\ms} \mathbf e_3 \cdot \bigg(\frac{\del \mathbf w_h}{\del\mathbf n}-q_h\mathbf n \bigg) \dd S.
\end{align*}
From the bounds on $\bu$, the first term is estimated by a constant, whereas the last term (the drag term) gives particularly the (no-)collision result. More precisely, one can show that the last integral (and, in fact, $\|\nabla\mathbf w_h\|_{L^2(\mf)}^2$ itself) behaves like $h^{-\beta}$ for $\beta=3\alpha/(1+\alpha)>0$. Estimating the remaining terms in the momentum equation, for $\alpha \geq \frac12$ leading to $\beta \geq 1$, one arrives at
\begin{align*}
    |\log h(T)| \sim \int_0^T \frac{\dot{h}(t)}{h(t)^\beta} \dd t = \mathcal{O}(T).
\end{align*}
This means that $h$ cannot vanish in finite time, thus the body stays away from $\del\Omega$. On the other hand, if $\alpha<\frac12$ such that $\beta<1$, a similar argument shows that $h$ vanishes in finite time provided the solid's density is greater than the fluid's one. In this context, it is worth noting that for the three-dimensional case, the authors in \cite{VaretHill2012} showed that collision can occur for \emph{any} value of $\alpha$, which seems to be a contrary result to the one of \cite{VaretHillairet2010} for dimension two.\\

The difficulty for the compressible case is now the following: anticipating that the collision/no-collision result should also stem from the term $\int_\mf \bS(\bu):\bD(\mathbf w_h) \dd x$, which does not contain the density and thus should behave like in the incompressible case, the ``best'' test function would still be $\mathbf w_h$ with the corresponding pressure $q_h$. Integration by parts then yields an additional term
\begin{align}\label{qh}
    \int_\mf q_h \div\bu \dd x,
\end{align}
which \textit{a priori} does not vanish since $\div \bu \neq 0$. Note further that no time derivative $\dot{h}$ is contained in this integral. It is not clear to us how to handle this integral, and whether or not it gives an additional contribution to the drag term and, in turn, to the (no-)collision.


\section{Results on No-collision }\label{sec4}
 In this section, we want to investigate a different setting, namely the case of a spherical solid with feedback control. We shall show that under a smallness condition, no collision occurs in this case. Let $\Omega \subset \mathbb{R}^3$ be a bounded domain with smooth boundary occupied by a fluid and a rigid body. We denote by $\mathcal{S}(t) \subset \Omega$, the domain of the rigid body of center $\mathbf{G}(t)$, where $t \in \mathbb{R}_{+}$ is the time variable.
We suppose that the fluid domain $\mf(t) = \Omega \setminus \overline{\mathcal{S}(t)}$ is connected.

The fluid is modeled by the compressible Navier-Stokes system whereas the motion of the rigid body is governed by the balance equations for linear and angular momentum. We also assume the no-slip boundary conditions. The equations of motion of fluid-structure are:
\begin{equation}\label{eq1feed}
\begin{cases}
\partial_t \rho_f+\div(\rho_f \bu_f)=0 & \mbox{ in }\mf(t),\\
\partial_t(\rho_f \bu_f)+\div(\rho_f \bu_f\otimes \bu_f)-\div{\bS(\bu_f)}+\nabla p=0 & \mbox{ in }\mf(t),\\
\bu_f =\dot{\mathbf G}(t)+ \omega(t)\times { (x-{\mathbf G}(t))} & \mbox{ on } {\partial \mathcal{S}(t)},\\
\bu_f=0 & \mbox{ on } {\partial \Omega},\\
m\ddot{\mathbf G}=- \int_{\partial \ms}\Big({\bS}(\bu_f)- p{\mathbb I}\Big){\mathbf n} \dd S + {\mathbf w},\\
\frac{\mathrm d}{\mathrm d t}(\mathbb J \omega)=-\int_{\partial \ms} (x-{\mathbf G}) \times \Big({\bS}(\bu_f)- p{\mathbb I}\Big){\mathbf n} \dd S,\\
\rho_f (0)=\rho_0,\ \rho_f\bu_f(0)=\mathbf q_0,\ \mathbf{G}(0)= \mathbf G_0,\ \dot{\mathbf G}(0)=\mathbf V_0,\ \omega(0)=\omega_0 & \mbox{ in }\mf(0).
\end{cases}
\end{equation}
Here $\mathbf{w}(t)$ is a feedback law of the form
\begin{equation}\label{feedback}
\mathbf{w}(t)= k_p(\mathbf{G}_1-\mathbf{G}(t))-k_d \dot{\mathbf{G}}(t).
\end{equation}
In control engineering, this feedback \eqref{feedback} is known as a proportional-derivative (PD) controller. The feedback $\mathbf{w}(t)$ is generated by a spring (with spring constant $k_p>0$) and a mechanical damper (with constant $k_d\geq 0$) connected between $\mathbf{G}(t)$ (centre of mass of $\mathcal{S}(t)$) and a fixed point $\mathbf{G}_1\in\Omega$. The definition of weak solutions of the system \eqref{eq1feed} is similar to Definition \ref{weaksol:def}, where the weak form of the momentum equation \eqref{momentum_weak} is replaced by 
\begin{multline}\label{momentum_weak1}
\int_0^t \int_\Omega \left[(\rho\bu)\cdot \partial_t \phi+ (\rho\bu \otimes \bu) : \bD(\phi) + p(\rho) \div \phi - \bS(\bu) : \bD(\phi) \right] \dd x \dd \tau\\
= \int_\Omega [\rho(t)\bu(t)\cdot \phi(t) - \mathbf q_0\cdot \phi(0)]\dd x + \int_0^T \mathbf{w}\cdot \ell_{\phi} \dd \tau,
 \end{multline}
for any $\phi \in C_c^\infty([0,T)\times \Omega)$ with $\phi(t,y)=\ell_\phi(t) + \omega_\phi(t)\times (y-\mathbf G(t))$ in a neighborhood of $\ms(t)$,
and the energy inequality \eqref{StdEI} is replaced by
\begin{align}\label{StdEI1}
\begin{split}
&\int_\Omega\left( \frac12 \rho(t,x)|\bu(t,x)|^2+\frac{\rho^\gamma(t,x)}{\gamma-1} \right) \dd x + \int_0^t\int_\Omega\left(2\mu |\bD(\bu)|^2+\lambda|\div\bu|^2\right)(\tau, x) \dd x \dd \tau\\
&\leq \int_{\Omega} \left( \frac12 \frac{|\mathbf q_0(x)|^2}{\rho_0(x)} + \frac{\rho_0^\gamma(x)}{\gamma-1}\right)\dd x + \int_0^t \mathbf{w}\cdot\dot{\mathbf{G}} \dd \tau,
\end{split}
\end{align}
where we extended $\rho,\mathbf u$ as in \eqref{extended}, and $\mathbf q_0=\rho(0) \bu(0)$.

The existence of weak solutions for system \eqref{eq1feed} can be established by following \cite[Theorem~4.1]{Feireisl2004}, and the existence of strong solutions can be found in \cite[Theorem 1.1]{roy2019stabilization}. We can use the energy estimate \eqref{StdEI1} for the system \eqref{eq1feed} with the feedback law \eqref{feedback} to obtain the following no-collision result:
\begin{Proposition}\label{energyest}
Let $\mathbf{G}_1 \in \Omega$ with $\operatorname{dist}(\mathbf{G}_1,\partial \Omega)>1$ and assume that $\mathbf{w}$ satisfies the feedback law \eqref{feedback}. Let  $(\rho,\mathbf{u},\mathbf{G})$ be a weak solution of the system \eqref{eq1feed} and the initial data satisfy \eqref{init}--\eqref{init2}. Then
\begin{align}\label{energy estimate full system}
\begin{split}
&\int_{\mf(t)} \left( \frac{\rho}{2}|\mathbf{u}|^2+\frac{a}{\gamma-1}\rho^{\gamma} \right) \dd x + \frac{m}{2}|\dot{\mathbf{G}}|^2 + \frac12 \mathbb{J} \omega\cdot\omega + \frac{k_p}{2}|\mathbf{G}_1-\mathbf{G}(t)|^2 + {k_d}\int_0^t |\dot{\mathbf{G}}(t)|^2 \dd \tau \\
&+ \int_{0}^{t}\int_{\mf(t)}\left(2\mu |\bD({\mathbf{u}})|^2 + \lambda|\div\mathbf{u}|^2\right) \dd x \dd \tau\\
&\leq C\left(\int_{\mf(0)}\left( \frac{1}{2}\frac{|\mathbf{q}_0|^2}{\rho_0}+\frac{a}{\gamma-1}\rho_{0}^{\gamma} \right) \dd x+\frac{m}{2}|\mathbf{V}_0|^2 + \frac12 \mathbb{J}\omega_0\cdot \omega_0 + \frac{k_p}{2}|\mathbf{G}_1-\mathbf{G}_0|^2\right).
\end{split}
\end{align}
Moreover, there exist $\delta, \varepsilon > 0$ such that if 
\begin{equation*}
\int_{\mf(0)}\left( \frac{|\mathbf q_0|^2}{2 \rho_0}+\frac{a}{\gamma-1}\rho_{0}^{\gamma} \right) \dd x + \frac{m}{2}|\mathbf{G}_0|^2 + \frac12 \mathbb{J}\omega_0 \cdot \omega_0 + \frac{k_p}{2}|\mathbf{G}_1-\mathbf{G}_0|^2 \leq \delta,
\end{equation*}
then \begin{equation}\label{noc}
\operatorname{dist}(\mathbf{G}(t),\partial\Omega) \geq 1+\varepsilon .
\end{equation}
\end{Proposition}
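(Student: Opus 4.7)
The approach splits naturally into two steps: first establish \eqref{energy estimate full system} by a direct manipulation of \eqref{StdEI1}; second, read the distance estimate \eqref{noc} off the single term $\tfrac{k_p}{2}|\mathbf G_1-\mathbf G(t)|^2$ that this manipulation produces on the left-hand side.

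For the energy estimate I would proceed as in the proof of Proposition~\ref{prop:EI}, splitting every integral in \eqref{StdEI1} between $\mf(t)$ and $\ms(t)$. Since $\bu|_{\ms(t)}=\dot{\mathbf G}(t)+\omega(t)\times(x-\mathbf G(t))$ and $\mathbf G$ is the centre of mass, the kinetic part gives
\begin{equation*}
\int_{\ms(t)}\frac{\rho}{2}|\bu|^2\dd x=\frac{m}{2}|\dot{\mathbf G}|^2+\frac12\mathbb J\omega\cdot\omega,
\end{equation*}
the pressure contribution $\int_{\ms(t)}\rho^\gamma/(\gamma-1)\dd x=\rho_s^{\gamma-1}m/(\gamma-1)$ is constant in time and cancels with its $t=0$ counterpart, and the dissipation vanishes on $\ms(t)$ because $\bu$ is rigid there. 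The only genuinely new ingredient is the feedback work: using \eqref{feedback} and the identity $(\mathbf G_1-\mathbf G)\cdot\dot{\mathbf G}=-\tfrac12\tfrac{\rd}{\rd\tau}|\mathbf G_1-\mathbf G|^2$, one integrates to obtain
\begin{equation*}
\int_0^t\mathbf w\cdot\dot{\mathbf G}\dd\tau=-\frac{k_p}{2}|\mathbf G_1-\mathbf G(t)|^2+\frac{k_p}{2}|\mathbf G_1-\mathbf G_0|^2-k_d\int_0^t|\dot{\mathbf G}|^2\dd\tau.
\end{equation*}
Transferring the two negative terms to the left-hand side of \eqref{StdEI1} produces \eqref{energy estimate full system}.

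Given the energy estimate, the no-collision conclusion is immediate. The smallness hypothesis bounds every nonnegative term on the left of \eqref{energy estimate full system} by $C\delta$; in particular
\begin{equation*}
|\mathbf G_1-\mathbf G(t)|\leq \sqrt{2C\delta/k_p}.
\end{equation*}
Writing $d_1:=\operatorname{dist}(\mathbf G_1,\partial\Omega)>1$, the triangle inequality gives $\operatorname{dist}(\mathbf G(t),\partial\Omega)\geq d_1-\sqrt{2C\delta/k_p}$. Picking any $\varepsilon\in(0,d_1-1)$ and then $\delta$ small enough so that $\sqrt{2C\delta/k_p}\leq d_1-1-\varepsilon$ yields \eqref{noc}. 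I do not anticipate a real obstacle in either step; the bookkeeping in the energy splitting is a minor elaboration of Proposition~\ref{prop:EI}, and the key observation is that the rotational kinetic energy on $\ms(t)$ and the feedback work \eqref{feedback} combine to give the \emph{positive} contributions $\tfrac12\mathbb J\omega\cdot\omega$, $\tfrac{k_p}{2}|\mathbf G_1-\mathbf G(t)|^2$ and $k_d\int_0^t|\dot{\mathbf G}|^2\dd\tau$ on the left-hand side, which is precisely what powers the triangle-inequality argument in the last step.
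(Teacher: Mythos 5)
Your proposal is correct and takes essentially the same route as the paper: the paper's proof likewise rewrites the feedback work via $-\mathbf{w}\cdot\dot{\mathbf G}=\frac{\rd}{\rd t}\left(\frac{k_p}{2}|\mathbf G_1-\mathbf G(t)|^2\right)+k_d|\dot{\mathbf G}(t)|^2$, inserts this into \eqref{StdEI1} together with the fluid/solid splitting of the extended variables, and obtains \eqref{noc} from the resulting bound on $|\mathbf G_1-\mathbf G(t)|$ by the same smallness-plus-triangle-inequality argument. Your write-up merely spells out the solid-side bookkeeping (rigid kinetic energy, constant pressure term, vanishing dissipation) a bit more explicitly than the paper does.
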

\begin{proof}
As $\mathbf{w}(t)= k_p(\mathbf{G}_1-\mathbf{G}(t))-k_d \dot{\mathbf{G}}(t)$, we obtain 
\begin{equation}\label{feeden}
-\mathbf{w}\cdot \dot{\mathbf G} = -k_p(\mathbf{G}_1-\mathbf{G}(t))\cdot \dot{\mathbf{G}}(t) + k_d|\dot{\mathbf G}(t)|^2 = \frac{\rm d}{{\rm d} t}\left(\frac{k_p}{2}|\mathbf{G}_1-\mathbf{G}(t)|^2\right)+k_d|\dot{\mathbf G}(t)|^2.
\end{equation}
We use the relation \eqref{feeden} in the energy inequality \eqref{StdEI1} and definition of extended $\rho,\mathbf u$ as in \eqref{extended} to obtain our required estimate \eqref{energy estimate full system}.

In order to establish \eqref{noc}, we can use the estimate \eqref{energy estimate full system} to obtain
\begin{multline}\label{energy estimate full system1}
 |\mathbf{G}_1-\mathbf{G}(t)|^2  
\leq \frac{2C}{k_p}\left(\int_{\mf(0)}\left( \frac{1}{2}\frac{|\mathbf{q}_0|^2}{\rho_0}+\frac{a}{\gamma-1}\rho_{0}^{\gamma} \right) \dd x+\frac{m}{2}|\mathbf{V}_0|^2 + \frac12 \mathbb{J}\omega_0\cdot \omega_0+\frac{k_p}{2}|\mathbf{G}_1-\mathbf{G}_0|^2\right).
\end{multline}
Finally, we use the fact $\operatorname{dist}(\mathbf{G}_1,\partial \Omega)>1$ and choose $\varepsilon>0$ such that 
\begin{equation*}
\frac{2C}{k_p}\left(\int_{\mf(0)}\left( \frac{1}{2}\frac{|\mathbf{q}_0|^2}{\rho_0}+\frac{a}{\gamma-1}\rho_{0}^{\gamma} \right) \dd x+\frac{m}{2}|\mathbf{V}_0|^2 + \frac12 \mathbb{J}\omega_0\cdot \omega_0+\frac{k_p}{2}|\mathbf{G}_1-\mathbf{G}_0|^2\right) < \varepsilon^2
\end{equation*}
to conclude \eqref{noc}.
\end{proof}
\begin{Remark}
If we allow higher regularity, then we can obtain the no-collision result even without the external PD-controller. As in \cite[Theorem 1.2]{HMTT}, we can establish the following result for the smooth rigid body $\ms(t)$: let $2<p<\infty$, $3<q<\infty$ satisfy the condition $\frac{1}{p}+\frac{1}{2q}\neq \frac{1}{2}$. Let the initial data satisfy 
\begin{equation*}
\rho_0 \in W^{1,q}(\mf(0)),\quad \mathbf{u}_0\in B^{2(1-1/p)}_{q,p}(\mf(0)) \footnote{Let $k\in\mathbb{N}$. For every $0<s<k$, $1\leq p <\infty$, $1\leq q <\infty$, we define the Besov spaces $B^s_{q,p}(\Omega)$ by real interpolation of Sobolev spaces: $B^s_{q,p}(\Omega)=(L^q(\Omega),W^{k,q}(\Omega))_{s/k,p}$. Please see \cite{MR781540} for a detailed presentation of Besov spaces.}, \quad \min_{\overline{\mf(0)}} \rho_0 > 0,
\end{equation*}
\begin{equation*}
\mathbf{G}_0\in \mathbb{R}^3,\quad \mathbf{V}_0\in \mathbb{R}^3,\quad \omega_0\in \mathbb{R}^3,
\end{equation*}
\begin{equation*}
\frac{1}{|\mf(0)|}\int_{\mf(0)}\rho_0 = \overline{\rho}>0 ,
\end{equation*}
\begin{equation*}
\mathbf{u}_0=0 \mbox{ on }\partial\Omega,\quad \mathbf{u}_0=\mathbf{V}_0 + \omega_0\times (y-\mathbf{G}_0) \mbox{ on }\partial\mathcal{S}(0).
\end{equation*}
If there exists $\delta > 0$ and $\varepsilon >0$ such that
\begin{equation*}
\|(\rho_0-\overline{\rho},\mathbf{u}_0,\mathbf{V}_0, \omega_0)\|_{W^{1,q}\times B^{2(1-1/p)}_{q,p} \times \mathbb{R}^3 \times \mathbb{R}^3}\leq \delta,
\end{equation*}
\begin{equation*}
\operatorname{dist}(\ms(0),\partial\Omega)\geq \varepsilon >0,
\end{equation*}
then
\begin{equation*}
\operatorname{dist}(\ms(t),\partial\Omega)\geq \frac{\varepsilon}{2}\mbox{ for all }t\in [0,\infty).
\end{equation*}
\end{Remark}

\section*{Acknowledgment}
{\it \v S. N. and F. O. have been supported by the Czech Science Foundation (GA\v CR) project 22-01591S.  Moreover, \it \v S. N.  has been supported by  Praemium Academiæ of \v S. Ne\v casov\' a. The Institute of Mathematics, CAS is supported by RVO:67985840. This research is supported by the Basque Government through the BERC 2022-2025 program and by the Spanish State Research Agency through BCAM Severo Ochoa excellence accreditation CEX2021-01142-S and through Grant PID2023-146764NB-I00. A. R would like to thank the Alexander von Humboldt-Foundation and Grant RYC2022-036183-I.}
\bibliography{reference}

\begin{thebibliography}{10}

\bibitem{DEES1}
{\sc B.~Desjardins and M.~J. Esteban}, {\em Existence of weak solutions for the
  motion of rigid bodies in a viscous fluid}, Arch. Rational Mech. Anal., 146
  (1999), pp.~59--71.

\bibitem{DEES2}
\leavevmode\vrule height 2pt depth -1.6pt width 23pt, {\em On weak solutions
  for fluid-rigid structure interaction: {C}ompressible and {I}ncompressible
  model}, Commun. Partial Differential Equations, 25 (2000), pp.~1399--1413.

\bibitem{Feireisl2004}
{\sc E.~Feireisl}, {\em On the motion of rigid bodies in a viscous compressible
  fluid}, Arch. Rational Mech. Anal., 167 (2003), pp.~281--308.

\bibitem{F3}
\leavevmode\vrule height 2pt depth -1.6pt width 23pt, {\em On the motion of
  rigid bodies in a viscous {I}ncompressible fluid}, Journal of Evolution
  Equations, 3 (2003), pp.~419--441.

\bibitem{FHN}
{\sc E.~Feireisl, M.~Hillairet, and {\v S}.~Ne\v{c}asov\'{a}}, {\em On the
  motion of several rigid bodies in an {I}ncompressible non-{N}ewtonian fluid},
  Nonlinearity, 21 (2008), pp.~1349--1366.

\bibitem{VaretHillairet2010}
{\sc D.~G\'{e}rard-Varet and M.~Hillairet}, {\em Regularity issues in the
  problem of fluid structure interaction}, Arch. Ration. Mech. Anal., 195
  (2010), pp.~375--407.

\bibitem{VaretHill2012}
{\sc D.~G{\'e}rard-Varet and M.~Hillairet}, {\em Computation of the drag force
  on a sphere close to a wall: the roughness issue}, ESAIM: Mathematical
  Modelling and Numerical Analysis, 46 (2012), pp.~1201--1224.

\bibitem{VHW2015}
{\sc D.~G\'{e}rard-Varet, M.~Hillairet, and C.~Wang}, {\em The influence of
  boundary conditions on the contact problem in a 3{D} {N}avier-{S}tokes flow},
  J. Math. Pures Appl. (9), 103 (2015), pp.~1--38.

\bibitem{GLSE}
{\sc M.~D. Gunzburger, H.~C. Lee, and A.~Seregin}, {\em Global existence of
  weak solutions for viscous {I}ncompressible flow around a moving rigid body
  in three dimensions}, J. Math. Fluid Mech., 2 (2000), pp.~219--266.

\bibitem{HMTT}
{\sc B.~H. Haak, D.~Maity, T.~Takahashi, and M.~Tucsnak}, {\em Mathematical
  analysis of the motion of a rigid body in a compressible
  {N}avier-{S}tokes-{F}ourier fluid}, Math. Nachr., 292 (2019), pp.~1972--2017.

\bibitem{HES}
{\sc T.~Hesla}, {\em Collision of smooth bodies in a viscous fluid: {A}
  mathematical investigation}, PhD Thesis - Minnesota,  (2005).

\bibitem{Hillairet2007}
{\sc M.~Hillairet}, {\em Lack of collision between solid bodies in a 2{D}
  {I}ncompressible viscous flow}, Comm. Partial Differential Equations, 32
  (2007), pp.~1345--1371.

\bibitem{HillairetTakahashi2009}
{\sc M.~Hillairet and T.~Takahashi}, {\em Collisions in three-dimensional fluid
  structure interaction problems}, SIAM journal on mathematical analysis, 40
  (2009), pp.~2451--2477.

\bibitem{joseph2003collisional}
{\sc G.~Joseph}, {\em Collisional dynamics of macroscopic particles in a
  viscous fluid}, California Institute of Technology, 2003.

\bibitem{kytomaa1992collision}
{\sc H.~Kyt{\"o}maa and P.~Schmid}, {\em On the collision of rigid spheres in a
  weakly compressible fluid}, Physics of Fluids A: Fluid Dynamics, 4 (1992),
  pp.~2683--2689.

\bibitem{lauga2005}
{\sc E.~Lauga, M.~P. Brenner, and H.~A. Stone}, {\em Microfluidics: the no-slip
  boundary condition}, arXiv preprint cond-mat/0501557,  (2005).

\bibitem{nevcasova2024collision}
{\sc {\v{S}}.~Ne{\v{c}}asov{\'a} and F.~Oschmann}, {\em A collision result for
  both non-{N}ewtonian and heat conducting {N}ewtonian compressible fluids},
  Proceedings of the Royal Society of Edinburgh Section A: Mathematics,
  (2024), pp.~1--15.

\bibitem{NRRS22}
{\sc {\v S}.~Ne{\v c}asov{\'a}, M.~Ramaswamy, A.~Roy, and
  A.~Schl{\"o}merkemper}, {\em Motion of a rigid body in a compressible fluid
  with {N}avier-slip boundary condition}, Journal of Differential Equations,
  338 (2022), pp.~256--320.

\bibitem{NP}
{\sc J.~Neustupa and P.~Penel}, {\em A weak solvability of the Navier-Stokes
  equation with Navier's boundary condition around a ball striking the wall},
  Advances in mathematical fluid mechanics, Springer, 2010.

\bibitem{roy2019stabilization}
{\sc A.~Roy and T.~Takahashi}, {\em Stabilization of a rigid body moving in a
  compressible viscous fluid}, J. Evol. Equ., 21 (2021), pp.~167--200.

\bibitem{SST}
{\sc J.~A. San~Mart{\'\i}n, V.~Starovoitov, and M.~Tucsnak}, {\em Global weak
  solutions for the two-dimensional motion of several rigid bodies in an
  {I}ncompressible viscous fluid}, Arch. Rational Mech. Anal., 161 (2002),
  pp.~113--147.

\bibitem{STA1}
{\sc V.~Starovoitov}, {\em Nonuniqueness of a solution to the problem on motion
  of a rigid body in a viscous {I}ncompressible fluid}, J. Math. Sci., 130
  (2005), pp.~4893--4898.

\bibitem{Staro2004}
{\sc V.~N. Starovoitov}, {\em Behavior of a {R}igid {B}ody in an
  {I}ncompressible {V}iscous {F}luid {N}ear a {B}oundary}, in Free Boundary
  Problems, P.~Colli, C.~Verdi, and A.~Visintin, eds., Basel, 2004,
  Birkh{\"a}user Basel, pp.~313--327.

\bibitem{sundararajakumar1996non}
{\sc R.~Sundararajakumar and D.~L. Koch}, {\em Non-continuum lubrication flows
  between particles colliding in a gas}, Journal of Fluid Mechanics, 313
  (1996), pp.~283--308.

\bibitem{MR781540}
{\sc H.~Triebel}, {\em Theory of function spaces}, vol.~78 of Monographs in
  Mathematics, Birkh\"{a}user Verlag, Basel, 1983.

\end{thebibliography}
\bibliographystyle{siam}

\end{document}